\documentclass[11pt]{amsart}

\usepackage[T1]{fontenc}
\usepackage[utf8]{inputenc}
\usepackage{lmodern}
\usepackage{microtype}
\usepackage{amsmath,amssymb,amsthm,mathtools}
\usepackage{enumitem}
\usepackage{booktabs,array}
\usepackage{hyperref}
\usepackage{aliascnt}
\usepackage[ruled,vlined]{algorithm2e}
\usepackage[nameinlink,capitalize,noabbrev]{cleveref}
\usepackage[a4paper,margin=31mm]{geometry}
\usepackage{tikz}
\usepackage{parskip}
\usepackage{longtable}

\hypersetup{colorlinks=true,linkcolor=blue,citecolor=blue,urlcolor=blue}
\setlist{nosep}

\newtheorem{maintheorem}{Theorem}
\newaliascnt{maincorollary}{maintheorem}
\newtheorem{maincorollary}[maincorollary]{Corollary}
\aliascntresetthe{maincorollary}

\newtheorem{theorem}{Theorem}[section]

\newaliascnt{proposition}{theorem}
\newtheorem{proposition}[proposition]{Proposition}
\aliascntresetthe{proposition}

\newaliascnt{lemma}{theorem}
\newtheorem{lemma}[lemma]{Lemma}
\aliascntresetthe{lemma}

\newaliascnt{corollary}{theorem}

\aliascntresetthe{corollary}

\theoremstyle{definition}
\newaliascnt{definition}{theorem}
\newtheorem{definition}[definition]{Definition}
\aliascntresetthe{definition}

\newaliascnt{example}{theorem}
\newtheorem{example}[example]{Example}
\aliascntresetthe{example}

\theoremstyle{remark}
\newaliascnt{remark}{theorem}

\aliascntresetthe{remark}

\Crefname{maintheorem}{Theorem}{Theorems}
\Crefname{maincorollary}{Corollary}{Corollaries}
\Crefname{theorem}{Theorem}{Theorems}
\Crefname{proposition}{Proposition}{Propositions}
\Crefname{lemma}{Lemma}{Lemmas}
\Crefname{corollary}{Corollary}{Corollaries}
\Crefname{definition}{Definition}{Definitions}
\Crefname{example}{Example}{Examples}
\Crefname{remark}{Remark}{Remarks}
\Crefname{section}{Section}{Sections}
\Crefname{appendix}{Appendix}{Appendices}

\newcommand{\K}{\mathbb K}
\newcommand{\Z}{\mathbb Z}
\newcommand{\N}{\mathbb N}
\newcommand{\A}{\mathbb A}
\newcommand{\PP}{\mathbb P}
\newcommand{\Spec}{\operatorname{Spec}}
\newcommand{\Cox}{\mathcal{R}}
\newcommand{\Bl}{\operatorname{Bl}}
\newcommand{\gr}{\operatorname{gr}}
\newcommand{\Ass}{\operatorname{Ass}}
\newcommand{\htop}{\operatorname{ht}}
\newcommand{\rk}{\operatorname{rk}}

\newcommand{\Cl}{\operatorname{Cl}}
\newcommand{\cR}{\mathcal R}
\newcommand{\cE}{\mathcal E}

\newcommand{\m}{\mathfrak m}
\newcommand{\n}{\mathfrak n}
\newcommand{\p}{\mathfrak p}

\newcommand{\B}{\mathcal J_{\mathrm{irr}}}

\title[Multiplicity-one Cox rings of toric point blow-ups]
{Multiplicity-One Cox Rings of Toric Point Blow-Ups}
\author{Antonio Laface and Luca Ugaglia}
\date{}

\subjclass[2020]{Primary 14M25, 13A30; Secondary 14E05, 14E30, 13C40.}

\keywords{Cox rings, toric varieties, blow-ups, Rees algebras,
lattice ideals, analytic spread, normal cones, Mori dream spaces.}

\begin{document}

\begin{abstract}
Let $X$ be a complete toric variety and let $e$ be the identity of its
open torus.  The Cox ring of $\Bl_eX$ is described by saturated powers
of the toric-point lattice ideal $I_X$.  We give a finite criterion for
generation in Rees multiplicity one in terms of analytic spreads of
projected lattice ideals and zero divisors in the associated graded
ring.
We apply this criterion first to fake weighted projective spaces, proving
that multiplicity-one generation is equivalent to $I_X$ being a complete
intersection.  For projective toric surfaces, we prove that
multiplicity-one generation holds if and only if it holds for every
Picard-number-one toric surface dominated by $X$.
\end{abstract}

\maketitle

\section*{Introduction}
\label{sec:introduction}

Throughout the paper, $\K$ is an algebraically closed field of
characteristic zero.  Let $X=X_\Sigma$ be a positive-dimensional
complete toric variety.  Its Cox ring is the polynomial ring
\[
        R=\Cox(X)=\K[x_1,\ldots,x_r],
\]
whose variables correspond to the rays of $\Sigma$, and we denote its
irrelevant ideal by $\B(X)$.
The open torus of $X$ acts transitively on itself, so the blow-up at a
point in the open torus is independent, up to isomorphism, of the chosen point.  
We therefore blow up the identity $e$.  In Cox coordinates, the
characteristic-quasitorus orbit lying over $e$ is defined by a lattice
ideal $I_X\subseteq R$.  Its lattice $L_X\subseteq\Z^r$ is obtained by
pairing the characters of the open torus with the primitive ray
generators of $\Sigma$.  Since the fan is complete, $L_X$ is positive.
We call $I_X$ the \emph{toric-point ideal} of $X$.
The starting point of this paper is the description of the Cox ring of
the blow-up in terms of saturated powers of $I_X$.  By
Hausen--Keicher--Laface \cite{HKL-computing},
\[
 \Cox(\Bl_eX)
 \cong
 R\bigl[t,(I_X^m:\B(X)^\infty)t^{-m}:m\geq1\bigr].
\]
We say that this Cox ring is \emph{generated in Rees multiplicity one}
when the algebra on the right is generated by $R$, $t$, and
$I_Xt^{-1}$.  Equivalently, saturation introduces no new elements in
higher multiplicity:
\[
        I_X^m:\B(X)^\infty=I_X^m
        \qquad\text{for every }m\geq1.
\]
The difficulty is that this is an infinite family of conditions.

The problem has roots in the study of symbolic Rees algebras of space
monomial primes.  In the case of weighted projective planes, finite
generation of $\Cox(\Bl_eX)$ was investigated by Huneke
\cite{HunekeSymbolicBlowups,HunekeSymbolicPowers}, Srinivasan
\cite{Srinivasan}, Cutkosky \cite{Cutkosky}, and
Goto--Nishida--Shimoda \cite{GotoNishidaShimoda}.  Cutkosky related
these algebras to blow-ups of weighted projective planes at points of
the open torus, while Goto--Nishida--Watanabe
\cite{GotoNishidaWatanabe} constructed characteristic-zero examples
whose symbolic Rees algebras are not finitely generated.  The
connection with negative curves was subsequently developed by
Kurano--Matsuoka \cite{KuranoMatsuoka}, Kurano--Nishida
\cite{KuranoNishida}, Inagawa--Kurano \cite{InagawaKurano}, and others;
see also \cite{KuranoNegativeCurves}.

The language of Cox rings placed these questions in the broader
framework of birational geometry.  Castravet--Tevelev
\cite{CastravetTevelevMDS} used the Goto--Nishida--Watanabe examples
in their study of the Mori dream space property of
$\overline M_{0,n}$.  González--Karu
\cite{GonzalezKaru,GonzalezKaruHigher} developed toric constructions
of blow-ups with non-finitely generated Cox rings, while
Hausen--Keicher--Laface \cite{HKL-weighted,HKL-computing} introduced
computational methods based on saturated Rees algebras and studied
generators of low Rees multiplicity for weighted projective planes.
Further results on weighted projective spaces, toric surfaces,
negative curves, and extremal rays appear in
\cite{HeWeighted,HeToric,GonzalezAnayaFamily,GonzalezAnayaMDS,
GonzalezAnayaExtremal,CastravetLafaceTevelevUgaglia}.
For broader accounts of this circle of questions, see
\cite{CastravetSurvey,GonzalezLafaceSurvey}.

Most of this literature concerns finite generation of the Cox ring and
poliedrality of the effective cone.  We consider a stronger question:
when is the Cox ring generated in Rees multiplicity one?   The low-multiplicity
results of \cite{HKL-weighted} may be regarded as a Picard-rank-one
precursor of this problem.

Our criterion is formulated in terms of the coordinate boundary of
the total coordinate space $\overline X\simeq\mathbb K^r$.  For a subset
$A\subseteq[r]:=\{1,\ldots,r\}$, let $L_A\subseteq\Z^A$ be the
coordinate projection of $L_X$.  When $L_A$ is positive, the
corresponding lattice ideal $I_{L_A}$ describes the transverse
behavior of $I_X$ along the associated coordinate stratum.  The
relevant numerical invariant is its analytic spread
$\ell(I_{L_A})$ at the coordinate origin.  Its definition and its
geometric interpretation in terms of normal-cone fibers are given in
\Cref{sec:projected}.
On the global side, set
\[
        \gr_{I_X}(R) 
        :=
        \bigoplus_{m\geq0}I_X^m/I_X^{m+1},
        \qquad
        \Delta:=x_1\cdots x_r\in R,
\]
and let $\bar\Delta$ denote the class of $\Delta$ in
$\gr_{I_X}(R)$.  The scheme $\Spec \gr_{I_X}(R)$ is the normal cone of
$V(I_X)$ in $\Spec R$.
The following theorem is our main result.

\begin{maintheorem}
\label{thm:main-criterion}
With the above notation, the following conditions are equivalent:
\begin{enumerate}[label=\textup{(\roman*)}]
\item $\ell(I_{L_A})\leq |A|-1$ for every nonempty
      $A\subseteq[r]$ such that $L_A$ is positive;
\item $\dim(\gr_{I_X}(R)/\Delta \gr_{I_X}(R))<r$.
\end{enumerate}

If $\bar\Delta$ is not a zero divisor in $\gr_{I_X}(R)$, then
$\Cox(\Bl_eX)$ is generated in Rees multiplicity one.  In particular,
if $\gr_{I_X}(R)$ is unmixed, then the equivalent conditions
\textup{(i)}--\textup{(ii)} imply that $\bar\Delta$ is not a zero
divisor, and hence imply multiplicity-one generation.
\end{maintheorem}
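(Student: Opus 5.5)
The plan has three parts: the zero-divisor implication, the dimension reformulation (i)$\Leftrightarrow$(ii), and the unmixed case.

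\emph{The zero-divisor implication.} First I would replace the irrelevant ideal by $\Delta$. Every monomial generator of $\B(X)$ divides a power of $\Delta$, and $\Delta$ lies in the radical of $\B(X)$ in the relevant sense: for a complete fan, each variable is missing from some maximal cone. Since $I_X$ is a lattice ideal, it is saturated with respect to every variable, so $I_X^m:\B(X)^\infty\subseteq I_X^m:\Delta^\infty$. Multiplicity-one generation therefore follows once $I_X^m:\Delta=I_X^m$ for all $m$. Now suppose $\bar\Delta$ is a nonzerodivisor on $\gr_{I_X}(R)$, and let $f\Delta\in I_X^m$ with $f\in I_X^k\setminus I_X^{k+1}$ and $k<m$. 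The initial form $\bar f\in I_X^k/I_X^{k+1}$ is nonzero. We have $\Delta\notin I_X$, since $I_X$ contains no monomials, so $\bar\Delta$ lives in degree $0$. Then $\bar\Delta\bar f=\overline{\Delta f}=0$ in degree $k$, because $\Delta f\in I_X^{k+1}$. This contradicts $\bar\Delta$ being a nonzerodivisor. Hence $f\in I_X^m$, and by induction all saturations are trivial.

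\emph{The equivalence (i)$\Leftrightarrow$(ii).} The ring $\gr_{I_X}(R)$ is equidimensional of dimension $r$. Indeed, the normal cone of an integral subscheme is of pure dimension equal to that of the ambient space, here $\dim R=r$. So (ii) says exactly that $V(\bar\Delta)$ contains no irreducible component of the normal cone of dimension $r$, i.e.\ the normal cone has no $r$-dimensional component lying over the coordinate hyperplanes. I would stratify $\Spec R$ by coordinate strata $O_A=\{x_i=0 \text{ for } i\in A,\ x_j\neq0 \text{ for } j\notin A\}$; here $A$ indexes the vanishing coordinates, matching the convention of \Cref{sec:projected}. Over $O_A$, the closure $V(I_X)$ meets the stratum only when $L_A$ is positive. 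Using the torus action and the localization $x_j\mapsto$ unit for $j\notin A$, the ideal $I_X$ becomes, étale-locally along $V(I_X)\cap O_A$, the extension of the projected lattice ideal $I_{L_A}$ in the variables $x_A$, times the complementary torus directions. Then the fiber of the normal cone over a point of $V(I_X)\cap O_A$ is the normal-cone fiber of $I_{L_A}$ at the origin, which has dimension $\ell(I_{L_A})$; this is the geometric interpretation promised in \Cref{sec:projected}. The dimension of $V(I_X)\cap O_A$ is $r-\rk L_X-|A|+\rk L_A$, or a similar count that I would compute from the lattices. Summing the base and fiber dimensions and comparing with $r$ turns (ii) into the inequality $\ell(I_{L_A})\le |A|-1$, provided the base-dimension count collapses correctly. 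I expect this to be the main obstacle: getting exactly $|A|-1$ requires the precise dimension of the stratum $V(I_X)\cap O_A$ and a careful identification of the normal cone over it via the torus splitting. I would first try to prove directly that the torus of $O_A$ acts freely on that piece, with quotient given by the projection map defining $L_A$.

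\emph{The unmixed case.} If $\gr_{I_X}(R)$ is unmixed, all its associated primes are minimal of dimension $r$. A zero divisor lies in some associated prime $P$, so if $\bar\Delta$ were a zero divisor we would get $\dim \gr_{I_X}(R)/\bar\Delta\ge\dim \gr_{I_X}(R)/P=r$, contradicting (ii). Hence $\bar\Delta$ is a nonzerodivisor, and the first part gives multiplicity-one generation.
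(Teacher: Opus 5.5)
Your zero-divisor step and your unmixed step are correct. The initial-form argument is a more elementary substitute for the paper's passage through $\cE(I)/t\cE(I)\cong\gr_I(R)$: if $f\Delta\in I_X^m$ with $f\in I_X^k\setminus I_X^{k+1}$ and $k<m$, then $\bar\Delta\bar f=0$ with $\bar f\neq0$. Combined with $\Delta\in\B(X)$, it gives $I_X^m:\B(X)^\infty\subseteq I_X^m:\Delta^\infty=I_X^m$ directly.

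The gap is in (i)$\Leftrightarrow$(ii), and it lies in the fiber dimension, not the base count.
\begin{itemize}
\item Your base dimension is right: $r-\rk L_X-|A|+\rk L_A=|A^c|-c$, where $c=\rk(L^{A^c})$ and $L^{A^c}=L_X\cap\Z^{A^c}$.
\item Along $Z=V(I_X)\cap O_A$, the local ideal is not just $I_{L_A}$ extended from the $x_A$-variables. Take a basis $u_1,\dots,u_c$ of $L^{A^c}$. The binomials $x^{u_j^-}(x^{u_j}-1)$ lie in $I_X$, and $x^{u_j^-}$ is a unit along $O_A$, so $y_j=x^{u_j}-1$ lie in the localized ideal.
\item The $y_j$ cut $Z$ out of the torus $O_A$. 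Together with $x_i$, $i\in A$, they form a regular system of parameters, so after completion the ideal is $(y_1,\dots,y_c)+I_{L_A,\rho}$.
\item By \Cref{lem:fiber-cone-splitting} the $y_j$ add $c$ to the analytic spread. The fiber therefore has dimension $c+\ell(I_{L_A})$, not $\ell(I_{L_A})$. Already for $A=\varnothing$ the fiber is the normal space to the orbit, of dimension $\rk L_X$, whereas your formula gives $0$.
\end{itemize}
With your fiber, the total over $Z$ is $|A^c|-c+\ell(I_{L_A})$, and comparing with $r$ gives $\ell(I_{L_A})\le|A|-1+c$. That is strictly weaker than (i) whenever $L^{A^c}\neq0$. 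Only the correct total $|A^c|+\ell(I_{L_A})$ yields exactly $|A|-1$. This is the ``collapse'' you flagged as the main obstacle. The free-action/quotient idea you propose cannot fix it, because it targets the base count, which was never the problem.

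You also need the converse of ``$V(I_X)$ meets $O_A$ only if $L_A$ is positive''. Otherwise some $A$ counted in (i) might have empty stratum and contribute nothing to (ii), and (ii)$\Rightarrow$(i) would fail. When $L_A$ is positive, every lattice binomial with nonzero $A$-part has both monomials vanishing on $O_A$. Hence $Z$ is cut out in the torus by $x^u=1$ for $u\in L^{A^c}$, and it is nonempty.

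A minor point: $V(I_X)$ need not be integral when $\Cl(X)$ has torsion. Purity of the normal cone still holds for any subscheme of $\A^r$, so your conclusion there is right, but for the wrong reason.
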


The equivalence in \Cref{thm:main-criterion} has a direct geometric
meaning.  Let $O_A$ be the coordinate stratum on which precisely the
variables indexed by $A$ vanish.  This stratum meets $V(I_X)$ if and
only if $L_A$ is positive, and the part of the normal cone lying over
each irreducible component of the intersection has dimension
\[
        |A^c|+\ell(I_{L_A}).
\]
Thus the numerical conditions say that no top-dimensional component
of the normal cone is supported on the coordinate boundary $V(\Delta)$.  
They do not, by themselves, exclude associated primes of smaller dimension
supported there.  The additional requirement that $\bar\Delta$ not be
a zero divisor rules out all such associated primes.  When $\gr_{I_X}(R)$ is
unmixed, every associated prime has dimension $r$, so this additional
condition already follows from the numerical one.

Our first application concerns fake weighted projective spaces.  A
fake weighted projective space of dimension $d$ is a complete
$\mathbb Q$-factorial toric variety of Picard number one with $d+1$
rays.  Its class group may have torsion, but its Cox ring is a
polynomial ring in $d+1$ variables and its irrelevant ideal is the
homogeneous maximal ideal.  In this case the multiplicity-one problem
has an exact answer.

As a concrete application, we consider the $225$ canonical Fano
tetrahedra in dimension three.  For $131$ of them, Demazure-root
automorphisms reduce the blow-up to complexity at most one.  Among the
remaining $94$ cases, exactly $27$ have complete-intersection
toric-point ideal.  By our criterion, the other $67$ do not have Cox
ring generated in Rees multiplicity one.  Thus exactly $158$ of the
$225$ corresponding blow-ups have Cox ring generated in Rees
multiplicity one; see \Cref{prop:canonical-fano-tetrahedra}.

\begin{maintheorem}
\label{thm:main-fwps}
Let $X$ be a fake weighted projective space of dimension at least two,
and let $I_X$ be its toric-point ideal.  Then
$\Cox(\Bl_eX)$ is generated in Rees multiplicity one if and only if
$I_X$ is a complete intersection.
\end{maintheorem}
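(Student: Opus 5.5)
Both implications will be proved through \Cref{thm:main-criterion}. Since $X$ is a fake weighted projective space, $R=\K[x_1,\dots,x_{d+1}]$ with $r=d+1$, and $\B(X)$ is the homogeneous maximal ideal $\m$. The lattice $L_X\subseteq\Z^{d+1}$ has rank $d$, because it is the image of the character lattice $\Z^d$ under pairing with the $d+1$ rays, which span $\Q^d$. Hence $I_X$ is a height-$d$ prime lattice ideal defining a curve, namely the closure of a quasitorus orbit. Multiplicity-one generation then says that $I_X^m$ has no $\m$-primary component for any $m$.

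\emph{Sufficiency.} Suppose $I_X$ is a complete intersection. Then $I_X$ is generated by a regular sequence of length $d$, so $\gr_{I_X}(R)\cong (R/I_X)[T_1,\dots,T_d]$ is a polynomial ring over a domain. In particular it is a domain, and hence unmixed. Since $\Delta\notin I_X$ (the orbit meets the torus), $\bar\Delta$ is a nonzero element of degree $0$. It is therefore a nonzerodivisor, and the last assertion of \Cref{thm:main-criterion} gives multiplicity-one generation. One can also argue directly: the powers of a complete intersection prime are unmixed, so $I_X^m:\m^\infty=I_X^m$.

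\emph{Necessity.} Suppose multiplicity-one generation holds, so $I_X^m=I_X^{(m)}$ for all $m$. Then no power $I_X^m$ has $\m$ as an associated prime. Consequently $\m$ is not associated to $\gr_{I_X}(R)$ either: the standard filtration argument shows that $\Ass(I^m/I^{m+1})\subseteq\Ass(R/I^{m+1})$. The plan is to deduce from this, in the graded local setting at $\m$, that the analytic spread satisfies $\ell(I_X)<\dim R=d+1$. The reason is that when $\ell(I)=\dim R$, Burch's inequality, or McAdam's theorem, puts $\m\in\Ass(R/\overline{I^m})$ for large $m$. With normal torsion-freeness one needs the version for ordinary powers, and McAdam's result gives $\m\in\Ass^*(I)$ exactly when $\ell(I_\m)=\htop\m$. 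So $\ell(I_X)=d=\htop I_X$. This matches condition (i) of \Cref{thm:main-criterion} for $A=[r]$, since $L_{[r]}=L_X$.

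\emph{Main obstacle.} It remains to show that a height-$d$ prime lattice ideal of a positive lattice in $d+1$ variables with $\ell(I_X)=\htop I_X=d$ is a complete intersection. Here $I_X$ is a $\Z^{d+1}/L_X$-graded ideal, of rank one modulo torsion, with $\m$ as its unique graded maximal ideal. Minimal reductions can therefore be chosen homogeneous with respect to a suitable positive $\Z$-grading, after reducing mod torsion. Analytic spread equal to the height means $I_X$ is equimultiple, so some $d$ elements $f_1,\dots,f_d$ generate a reduction $J$. To upgrade $J$ to $I_X$, I would exploit $I_X^{(m)}=I_X^m$ together with the fact that $R/I_X$ is one-dimensional. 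By Cowsik--Nori-type results, a height $d$ prime in a regular ring of dimension $d+1$ whose symbolic powers equal its ordinary powers and which is generically a complete intersection is a complete intersection. Alternatively, for the binomial setting, I would use the monomial-curve-style description of lattice ideals of rank $d$ in $d+1$ variables, in the spirit of Herzog's and Huneke's analysis for space monomial curves, to check that equimultiplicity forces exactly $d$ minimal binomial generators. I expect this step, invoking or adapting Cowsik--Nori for the quasi-homogeneous, non-local, possibly torsion situation, to be the hardest part.
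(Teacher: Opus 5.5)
Your plan follows the paper's route in both directions. Sufficiency goes through \Cref{thm:main-criterion}, or through unmixedness of powers of a complete intersection; necessity goes through Cowsik--Nori at $\m$. However, one false claim breaks part of your sufficiency argument as written, and your necessity argument stops exactly where the paper finishes it.

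\emph{$I_X$ is not prime in general.} Since $\Z^{d+1}/L_X\cong\Cl(X)$, the lattice $L_X$ is saturated only when $\Cl(X)$ is torsion-free. So $I_X$ fails to be prime precisely in the genuinely fake case. There $V(I_X)$ is the closure of a disconnected quasitorus orbit, a union of several curves through the origin, and $I_X$ is only radical. Hence ``$\gr_{I_X}(R)$ is a polynomial ring over a domain, hence a domain'' is false, and $\Delta\notin I_X$ does not by itself make $\bar\Delta$ a nonzerodivisor. The repair is easy. $R/I_X$ is reduced, so its associated primes are its minimal primes. None of them contains $\Delta$, because lattice ideals satisfy $I_L:\Delta^\infty=I_L$. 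Hence $\bar\Delta$ is a nonzerodivisor on $R/I_X$, and therefore on $(R/I_X)[T_1,\ldots,T_d]$. Your direct argument also works without primality: each $I_X^k/I_X^{k+1}$ is a free $R/I_X$-module, so $\Ass(R/I_X^m)\subseteq\Ass(R/I_X)$, which does not contain $\m$. The paper instead notes that $\gr_{I_X}(R)$ is Cohen--Macaulay, hence unmixed, and checks condition (i) of \Cref{thm:main-criterion}, where only $A=[r]$ occurs and $\ell(I_X)=d=r-1$.

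\emph{Necessity.} The McAdam/Burch detour to $\ell(I_X)=d$ and the search for a $d$-generated reduction are unnecessary. Also, your claim that minimal reductions can be taken homogeneous is unjustified for ideals generated in several degrees. What you already have is exactly the input Cowsik--Nori needs.

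\begin{enumerate}
\item Every homogeneous prime containing $I_X$ is either minimal over $I_X$ or equal to $\m$. So the saturation hypothesis says that $R_\m/(I_XR_\m)^m$ is one-dimensional with positive depth, i.e.\ Cohen--Macaulay, for every $m$.
\item $I_XR_\m$ is radical, hence generically a complete intersection: at a minimal prime it becomes the maximal ideal of a regular local ring.
\item Cowsik--Nori then shows that $I_XR_\m$ is a complete intersection.
\item The local-to-global step you flagged as the main obstacle is just graded Nakayama: $\mu_R(I_X)=\dim_\K I_X/\m I_X=\mu_{R_\m}(I_XR_\m)=d$. Then $d$ homogeneous generators of a height-$d$ ideal in the Cohen--Macaulay ring $R$ form a regular sequence.
\end{enumerate}

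Torsion in $\Cl(X)$ causes no further difficulty once ``prime'' is replaced by ``radical''. No analysis of binomial generators in the style of Herzog or Huneke is needed.
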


We next turn to projective toric surfaces.  For
$A\subseteq[r]$, put $P_A=(x_i:i\in A)$.  If $A$ consists of three
indices, then $L_A$ is positive precisely when the corresponding
three rays positively span $N_\mathbb R$.  They then form a complete
three-ray fan $\Sigma_A$, and the original fan refines $\Sigma_A$.
The identity on the lattice therefore induces a toric birational
contraction
\[
        \varphi_A:X\longrightarrow X_A:=X_{\Sigma_A},
\]
which we call a \emph{three-ray contraction}.

\begin{maintheorem}
\label{thm:main-surface}
Let $X$ be a projective toric surface with Cox ring
$\K[x_1,\ldots,x_r]$ and toric-point ideal $I_X$.  The following
conditions are equivalent:
\begin{enumerate}[label=\textup{(\roman*)}]
\item $\Cox(\Bl_eX)$ is generated in Rees multiplicity one;
\item $I_X\not\subseteq P_A^2$ for every three-element subset
      $A\subseteq[r]$ such that $L_A$ is positive;
\item $\Cox(\Bl_eX_A)$ is generated in Rees multiplicity one for every
      three-ray contraction $X\to X_A$.
\end{enumerate}
\end{maintheorem}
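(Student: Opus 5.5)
I would prove the cycle (i)$\Rightarrow$(ii)$\Rightarrow$(iii)$\Rightarrow$(ii)$\Rightarrow$(i), using \Cref{thm:main-criterion} for the sufficiency direction and \Cref{thm:main-fwps} to pass between (ii) and (iii). Here $r\ge3$ and $I_X$ is a height-two lattice ideal in $R=\K[x_1,\dots,x_r]$ with $\dim V(I_X)=r-2$.

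\textbf{The three-ray case (ii)$\Leftrightarrow$(iii).} The toric-point ideal of $X_A$ is the lattice ideal $I_{L_A}\subseteq\K[x_i:i\in A]$. The elimination ideal $I_X\cap\K[x_i:i\in A]$ equals $I_{L_A}$ after saturation, and $L_A$ is the projected lattice. I would first check that $I_X\not\subseteq P_A^2$ is equivalent to $I_{L_A}\not\subseteq P_A^2$. One direction comes from specializing $x_j\mapsto1$ for $j\notin A$, which maps $I_X$ onto $I_{L_A}$ (the projection of lattice ideals), and this preserves the order at $P_A$ in the $A$-variables. Now $X_A$ is a fake weighted projective plane, and $I_{L_A}$ is a height-two lattice ideal in three variables. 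Such an ideal is a complete intersection if and only if it contains an element of order one at the origin; this is the classical Herzog-type description of space monomial curves. When the ideal is not a complete intersection, it is generated by the three $2\times2$ minors of a $2\times3$ matrix whose entries are monomials lying in $P_A$, so every generator has order $\ge2$. Conversely, a binomial of order one such as $x_1-x_2^ax_3^b$ forces a complete intersection. \Cref{thm:main-fwps} then gives (ii)$\Leftrightarrow$(iii).

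\textbf{(i)$\Rightarrow$(ii).} Suppose $I_X\subseteq P_A^2$ for some positive three-element $A$. Then $I_{L_A}\subseteq P_A^2$ after localizing at the stratum $O_A$. In the ring localized at $P_A$, the fiber of the normal cone over the point $V(I_X)\cap O_A$ equals the fiber cone of $I_{L_A}$. Since $I_{L_A}$ is not a complete intersection, it has three minimal generators of order $\ge2$ in a three-dimensional regular ring. These are algebraically independent in the fiber cone, because the generic-minors ideal has maximal analytic spread. Hence $\ell(I_{L_A})=3=|A|$, so condition (i) of \Cref{thm:main-criterion} fails.

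The remaining step is to show that this failure actually produces a new element of $I_X^m:\B(X)^\infty$. I would do this directly: the $P_A$-primary component of $I_X^m$ is not contained in $I_X^m$. Following the weighted-projective-plane argument of \Cref{thm:main-fwps}, I would lift a non-multiplicity-one generator of $\Cox(\Bl_eX_A)$ (for instance the "negative curve" section, or the element built from the $2\times3$ minors) via the pullback $\varphi_A^*$ to $\Cox(\Bl_eX)$. Since $\varphi_A$ is a toric contraction, pullback of divisor classes gives an inclusion of Cox rings of the blow-ups, compatible with the multiplicity grading by $t$. A generator of multiplicity $m\ge2$ that is not in the subalgebra generated in multiplicity one stays outside after pullback, because the exceptional divisors of $\varphi_A$ correspond to extra variables that can be set to $1$.

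\textbf{(ii)$\Rightarrow$(i).} I would verify condition (i) of \Cref{thm:main-criterion} together with unmixedness, or non-zero-divisorness of $\bar\Delta$. A positive $L_A$ requires $|A|\ge3$. If $|A|\ge4$, then $I_{L_A}$ has height two in $|A|$ variables, so $\ell\le\dim$ of the fiber $\le|A|-1$ automatically. Indeed, the normal cone has dimension $r$, and the fiber over the stratum point has dimension at most $r-|A^c|-1$ because the stratum $O_A$ meets $V(I_X)$ in dimension $|A^c|-\ldots$; I would compute this via the projection formula $|A^c|+\ell(I_{L_A})\le r$. When $|A|=3$, hypothesis (ii) gives an order-one element, so $I_{L_A}$ is a complete intersection of two elements and $\ell=2$.

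For non-zero-divisorness, the key point is that $I_X$ is locally a complete intersection along the boundary. Every boundary point of $V(I_X)$ lies over a stratum where some positive three-subset has a complete-intersection projection, and the lattice ideal is locally generated by two binomials there. The associated graded ring is then locally a polynomial ring over $R/I_X$, and $R/I_X$ is Cohen–Macaulay of dimension $r-2$ (codimension two, a lattice ideal of a two-dimensional torus orbit closure). Hence $\bar\Delta$ is regular.

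\textbf{Main obstacle.} I expect this last local complete-intersection claim to be the hardest step, since it needs every stratum with $|A|\ge4$ to be controlled by a three-ray substratum. I would first try to argue that any stratum point of $V(I_X)$ specializes in a neighborhood to the generic point of a stratum $O_A$ with $A$ minimal positive. In dimension two, minimal positive sets have exactly three elements by Carathéodory's theorem.
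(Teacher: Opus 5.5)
\textbf{Main gap: the non-zero-divisor step in (ii)$\Rightarrow$(i).} You assert that $I_X$ is locally a complete intersection at every boundary point of $V(I_X)$ and that $R/I_X$ is Cohen--Macaulay. Both statements are false in general.

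The stratum $A=[r]$ always has $L_A=L_X$ positive, and $Z_{[r]}$ is the origin. Since $I_X$ is homogeneous, it is a complete intersection at the origin only if it is one globally. Concretely, take the minimal surface with rays $(1,0),(1,2),(-1,0),(-1,-2)$. It satisfies (ii) vacuously, because no three of its rays positively span, and it is covered by \Cref{cor:main-minimal}. Here $L_X=\{(s,t,-s,-t): s\equiv t \bmod 2\}$ and
\[
I_X=(x_1-x_3,\,x_2-x_4)\cap(x_1+x_3,\,x_2+x_4).
\]
This is two planes in $\A^4$ meeting only at the origin, so $I_X$ needs four generators there and $\operatorname{depth}R/I_X=1$.

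Your suggested repair via minimal positive three-subsets cannot work either. Being a local complete intersection is an open condition: it passes from a point to its generizations, never from the generic point of a larger stratum down to the special points of $Z_A$. In this example there is moreover no positive three-subset at all.

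The missing idea is global, not local. By \cite{HaMorales}, the Rees algebra of the codimension-two lattice ideal $I_X$ is Cohen--Macaulay. Hence $\gr_{I_X}(R)$ is Cohen--Macaulay by \cite[Proposition~1.1]{HunekeAssociatedGraded}, and so it is unmixed. \Cref{lem:unmixed-check} then converts the dimension inequality of \Cref{thm:main-criterion}(ii) directly into $\bar\Delta$ being a non-zero-divisor. No local complete-intersection property is needed.

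\textbf{Second gap: the numerical condition for $|A|\ge4$.} The count $|A^c|+\ell(I_{L_A})\le\dim C_{I_X}=r$ only gives $\ell(I_{L_A})\le|A|$. The strict inequality, namely that $\pi^{-1}(Z_A)$ is not a whole component of the normal cone, is exactly what must be proved. The paper gets it from $\ell(I_{L_A})\le3$ for codimension-two lattice ideals (\Cref{thm:ha-morales-huneke}).

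\textbf{Smaller points.}
\begin{enumerate}
\item Two opposite rays give a positive $L_A$ with $|A|=2$. This is harmless, since then $\ell=1$.
\item The elimination ideal $I_X\cap\K[x_i:i\in A]$ is the lattice ideal of $L\cap\Z^A$, not of $L_A$. What you actually need, and later use, is the specialization $x_{A^c}\mapsto1$.
\item ``Complete intersection iff there is an order-one element'' is not Herzog's theorem in this generality. The curve $(t^4,t^6,t^7)$ has ideal $(x^3-y^2,\,z^2-x^2y)\subseteq(x,y,z)^2$, and $I_{L_A}$ need not even be prime. The claim does hold for three-ray fans, because primitivity of the rays forces pairwise coprime weights; this is what \cite[Proposition~3.4]{LU-minimal} supplies.
\item Your route (i)$\Rightarrow$(iii) by pullback is a reasonable alternative to citing \cite[Theorem~3.5]{LU-minimal}. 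However, when $X_A$ is singular there is no ring map between the Cox rings. You must:
  \begin{enumerate}
  \item round $\varphi_A^*D$ up to an integral divisor, or equivalently homogenize $g$ with suitable $x_{A^c}$-monomials;
  \item verify saturation through the order of vanishing at $e$;
  \item use that $x_{A^c}\mapsto1$ maps $I_X^m$ onto $I_{L_A}^m$ to conclude that the lift is not in $I_X^m$.
  \end{enumerate}
\end{enumerate}
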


The implication \textup{(i)}$\Rightarrow$\textup{(ii)} was proved in
\cite[Theorem~3.5]{LU-minimal}.  The converse combines
\Cref{thm:main-criterion}, the analytic-spread theorem of
Hà--Morales for codimension-two lattice ideals
\cite{HaMorales}, and the complete-intersection criterion for
three-ray fans in \cite[Proposition~3.4]{LU-minimal}.  In this way,
the low-Rees-multiplicity problem studied for weighted projective
planes in \cite{HKL-weighted} becomes a local three-ray criterion for
arbitrary projective toric surfaces.

As an immediate consequence, we obtain the following result, which was conjectured in~\cite{LU-minimal}.

\begin{maincorollary}
\label{cor:main-minimal}
Let $X$ be a minimal complete toric surface of Picard rank two.  Then
$\Cox(\Bl_eX)$ is generated in Rees multiplicity one.
\end{maincorollary}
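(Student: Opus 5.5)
The plan is to deduce the corollary from \Cref{thm:main-surface} by checking condition (iii), or equivalently (ii), for every three-ray contraction of $X$. A minimal complete toric surface of Picard rank two has exactly four rays $v_1,\dots,v_4$. Minimality means that no ray can be removed while keeping a complete fan that $\Sigma$ refines by a blow-down, i.e.\ $X$ admits no toric birational contraction to a Picard-rank-one surface. Completeness of the four-ray fan then forces the rays to come in two opposite pairs spanning the same lines, or more generally to have no three-element subset that positively spans $N_\mathbb R$. Consequently no $A\subseteq[4]$ with $|A|=3$ has $L_A$ positive, and both (ii) and (iii) of \Cref{thm:main-surface} hold vacuously.

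The key steps, in order, are as follows.

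\textbf{Step 1.} I will use the combinatorial characterization of minimality for complete toric surfaces: $X$ is minimal exactly when no three-ray subfan $\Sigma_A$ is refined by $\Sigma$ through a toric contraction. Since every three-ray contraction $\varphi_A\colon X\to X_A$ is a birational toric morphism to a Picard-number-one surface, its existence for a Picard-rank-two surface would contract a curve, contradicting minimality. This is where I expect to invoke the definition in \cite{LU-minimal}. The risk is that ``minimal'' there might mean minimal with respect to a weaker notion, such as contracting only curves of negative self-intersection. In that case I would instead verify (ii) directly.

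\textbf{Step 2 (fallback).} For each three-element $A$ with $L_A$ positive, I would show that $I_X\not\subseteq P_A^2$. Write $A=[4]\setminus\{j\}$ and take a binomial generator of $I_X$ coming from a character $u$ with $\langle u,v_j\rangle=0$, or with exponent one at some variable in $A$. Such a binomial $x^{a}-x^{b}$ has a monomial of $P_A$-order at most one. In the four-ray case the lattice $L_X$ has rank $2$, and one can choose $u$ orthogonal to $v_j$. The resulting binomial involves only $x_i$ with $i\in A$, and one of its two monomials has degree $\langle u,v_i\rangle$ in a single variable.

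\textbf{Step 3.} Minimality enters through the condition that some such pairing equals $\pm1$, i.e.\ that some $v_i$ with $i\in A$ has lattice distance one from the line $u^\perp$. This is exactly the condition under which the contracted curve would \emph{not} be extractable, which I expect to match the minimality hypothesis. This step is the main obstacle, since it requires translating minimality into a lattice-distance statement for all relevant triples. I would first try working in coordinates in which two rays are $e_1,e_2$, using unimodularity arguments and explicitly listing the possible triples $A$.

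Finally, with (ii) verified, \Cref{thm:main-surface} gives (i), i.e.\ generation in Rees multiplicity one.
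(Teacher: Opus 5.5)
Your Step 1 is correct and is essentially the paper's argument. The paper takes the normal form $(1,0),(a,b),(-1,0),(-a,-b)$ from \cite{LU-minimal} and uses the binomials $x_1^b-x_3^b$, $x_2^b-x_4^b$; for each triple $A$ these give a monomial of $A$-degree zero, which is exactly a witness that $L_A$ is not positive, so condition (ii) of \Cref{thm:main-surface} holds just as in your vacuity argument. Your fallback Steps 2--3 are unnecessary, and Step 3's lattice-distance heuristic is not where minimality enters: it is minimality, not completeness alone, that rules out positively spanning triples.
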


The surface criterion also complements the literature on finite
generation and negative curves for toric point blow-ups
\cite{GonzalezKaru,HeToric,GonzalezAnayaMDS,
GonzalezAnayaExtremal,CastravetLafaceTevelevUgaglia}.
As a further application, we study partial toric resolutions of
Gorenstein toric Fano surfaces.  We prove in
\Cref{prop:gorenstein-partial-resolutions} that multiplicity-one
generation fails exactly for those dominating a unique exceptional
rank-one surface.  For this surface,
\Cref{prop:exceptional-cox-ring} shows that the Cox ring of the blow-up
has one additional generator in Rees multiplicity two.

The paper is organized as follows.
\Cref{sec:normal-cone} develops the algebraic criteria involving
saturated Rees algebras, normal cones, and analytic spread.
\Cref{sec:projected} studies the normal cone over coordinate strata
and proves \Cref{thm:main-criterion}.
\Cref{sec:rank-one} treats fake weighted projective spaces.
Finally, \Cref{sec:surfaces} proves the surface theorem and its
applications.

\subsection*{Acknowledgements}
The authors were partially supported by Proyecto FONDECYT Regular
No.~1230287.
The second author is member of 
INdAM - GNSAGA

\section{Rees algebras, normal cones, and analytic spread}
\label{sec:normal-cone}

This section collects the commutative-algebra tools used throughout the
paper.  We first relate saturated extended Rees algebras to zero divisors
in the associated graded ring.  We then recall how analytic spread
describes the fibers of the normal cone, and conclude with a simple
splitting result for fiber cones that will be used in the local analysis
of coordinate strata.
For the standard facts on Rees algebras, associated graded rings, fiber
cones, and analytic spread used in this section, we refer to
\cite[Chapter~5]{HunekeSwanson} and \cite[Chapter~1]{Vasconcelos}.
For associated primes, zero divisors, and unmixed rings, see
\cite{BrunsHerzog}.  We use the usual interpretation of
$\Spec\gr_I(R)$ as the normal cone; see, for instance,
\cite[Chapter~5]{EisenbudCA}.

We begin with the algebraic setup.  Let $R$ be a Noetherian ring, and
let $I\subsetneq R$ and $\mathfrak b\subseteq R$ be ideals.  The ordinary
extended Rees algebra of $I$ and its $\mathfrak b$-saturated version are
\[
 \cE(I)=R[t,It^{-1}],
 \qquad
 \cE^{\mathrm{sat}}_{\mathfrak b}(I)
 =R\bigl[t,(I^m:\mathfrak b^\infty)t^{-m}:m\geq1\bigr]
 \subseteq R[t,t^{-1}].
\]
Thus $\cE(I)=\cE^{\mathrm{sat}}_{\mathfrak b}(I)$ precisely when every
power of $I$ is already $\mathfrak b$-saturated.  Since this involves
infinitely many powers, we pass to the associated graded ring
\[
        \gr_I(R)=\bigoplus_{m\geq0}I^m/I^{m+1},
\]
which records them simultaneously.  Its spectrum
$C_I:=\Spec\gr_I(R)$ is the normal cone of $V(I)$ in $\Spec R$, and for
every proper ideal $I$ one has the standard equality
$\dim\gr_I(R)=\dim R$.

\begin{proposition}
\label{prop:boundary-torsion}
Let $R$ be Noetherian, let $I\subsetneq R$ and
$\mathfrak b\subseteq R$ be ideals, and choose an element
$h\in\mathfrak b$. Let
\[
    \bar h\in R/I=\operatorname{gr}_I(R)_0
\]
denote the image of $h$. If $\bar h$ is a non-zero-divisor on
$\operatorname{gr}_I(R)$, then
\[
    \cE(I)=\cE^{\mathrm{sat}}_{\mathfrak b}(I).
\]
\end{proposition}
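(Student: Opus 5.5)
The plan is to reduce the equality of algebras to the statement that every power $I^m$ is $\mathfrak b$-saturated. The key step is to show that $h$ is a non-zero-divisor modulo each $I^m$, i.e.\ $I^m:h=I^m$ for all $m\geq1$. Granting this, the conclusion follows at once. Indeed, if $f\in I^m:\mathfrak b^\infty$, then $f\mathfrak b^k\subseteq I^m$ for some $k$, so in particular $h^kf\in I^m$. Iterating $I^m:h=I^m$ then gives $f\in I^m$. Hence $I^m:\mathfrak b^\infty=I^m$ for all $m$, and since $(I^m:\mathfrak b^\infty)t^{-m}=I^mt^{-m}=(It^{-1})^m$, the generators of $\cE^{\mathrm{sat}}_{\mathfrak b}(I)$ already lie in $\cE(I)$. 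The reverse inclusion is trivial.

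To prove $I^m:h=I^m$, I will argue by induction on $m$, using the hypothesis on $\bar h$ degree by degree. Multiplication by $\bar h\in\gr_I(R)_0$ acts on each graded piece $I^j/I^{j+1}$ simply as multiplication by $h$. So the hypothesis says exactly the following: if $g\in I^j$ and $hg\in I^{j+1}$, then $g\in I^{j+1}$. This holds for every $j\geq 0$, including $j=0$, where it says that $h$ is a non-zero-divisor on $R/I$, i.e.\ $I:h=I$.

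For the inductive step, suppose $hf\in I^m$ with $m\geq1$. Then $hf\in I^{m-1}$, so by induction $f\in I^{m-1}$. Applying the degree-$(m-1)$ statement to $g=f$ then gives $f\in I^m$.

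I do not expect a genuine obstacle. The only point needing care is the bookkeeping: one must check that being a non-zero-divisor on the whole graded ring is equivalent to being one on each homogeneous component. This holds because $\bar h$ has degree $0$, so its multiplication preserves the grading and a homogeneous decomposition of a kernel element has homogeneous kernel components. One should also note that no hypothesis on $\mathfrak b$ beyond $h\in\mathfrak b$ is needed, and that $I\neq R$ ensures the graded ring is nonzero. Noetherianity is used only to make sense of saturation as a stabilizing union of colon ideals, and the argument does not really require it.
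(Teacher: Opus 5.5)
Your proof is correct, and it takes a more elementary route than the paper's.

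You read the hypothesis degree by degree: for $g\in I^j$, $hg\in I^{j+1}$ forces $g\in I^{j+1}$. You then bootstrap this by induction to $I^m:h=I^m$, and finish with $I^m:\mathfrak b^\infty\subseteq I^m:h^\infty=I^m$.

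The paper instead works with $A=\cE(I)$ and $B=\cE^{\mathrm{sat}}_{\mathfrak b}(I)$. It notes $A_h=B_h$ and uses $A/tA\cong\gr_I(R)$ to rewrite the hypothesis as $tA:h^\infty=tA$. From this it deduces $tB\cap A=tA$. It then takes a homogeneous element $gt^{-m}\in B\setminus A$ with $m$ minimal and derives a contradiction.

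Unwinding $tA:h^\infty=tA$ in $t$-degree $-n$ gives exactly your statement that $g\in I^n$ and $h^kg\in I^{n+1}$ imply $g\in I^{n+1}$. The minimal counterexample plays the role of your induction, so the underlying mechanism is the same.

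What your version buys:
\begin{itemize}
\item it stays inside $R$ and needs no localization;
\item it records the stronger intermediate fact that $h$ is regular on every $R/I^m$;
\item it makes visible that the Noetherian hypothesis is not needed.
\end{itemize}

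What the paper's version buys: it isolates the mechanism at the level of algebras. About $B$ it uses only that $B$ is a graded subring of $R[t,t^{-1}]$ containing $A$, whose elements are carried into $A$ by powers of $h$. So the same argument rules out every such extension of $A$ at once.
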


\begin{proof}
Set $A=\cE(I)$ and $B=\cE^{\mathrm{sat}}_{\mathfrak b}(I)$.  The first
observation is that $A_h=B_h$.  Indeed, if
$gt^{-m}\in B$, then $g\in I^m:\mathfrak b^\infty$.  Since
$h\in\mathfrak b$, a power of $h$ sends $g$ into $I^m$, and therefore
$gt^{-m}\in A_h$.
There is a natural isomorphism $A/tA\cong \operatorname{gr}_I(R)$. 
Under this identification, $\bar h$ is a non-zero-divisor on $\operatorname{gr}_I(R)$ 
if and only if $tA:h=tA$ (because multiplication by $h$ is injective), 
equivalently $tA:h^m=tA$ for every $m\geq 1$. 
Hence the hypothesis is equivalent to
\[
 tA:h^\infty=tA.
\]
It follows that $tB\cap A=tA$.  In fact, if $a\in tB\cap A$, then
the inclusion $B\subseteq B_h = A_h$ gives
\[
        a\in tA_h\cap A=tA:h^\infty=tA
        .
\]
The reverse inclusion is immediate.
Suppose now that $A\neq B$.  Choose the smallest integer $m\geq1$ for which
$B$ contains a homogeneous element $gt^{-m}$ not belonging to $A$.  The
nonnegative $t$-degree pieces of $A$ and $B$ are the same, so minimality gives
$gt^{-m+1}\in A$.  On the other hand,
$gt^{-m+1}=t(gt^{-m})\in tB$.  Thus $gt^{-m+1}\in tB\cap A=tA$, and we may
write $gt^{-m+1}=ta$ with $a\in A$.  Cancelling the invertible element $t$ in
$R[t,t^{-1}]$ yields $gt^{-m}=a\in A$, a contradiction.
\end{proof}

A Noetherian ring $G$ is called \emph{unmixed} if every prime in
$\operatorname{Ass}(G)$ has dimension $\dim G$.

\begin{lemma}
\label{lem:unmixed-check}
Let $G$ be an unmixed Noetherian ring of dimension $r$, and let $h\in G$.
If
\[
\dim(G/hG)<r,
\]
then $h$ is not a zero divisor in $G$.
\end{lemma}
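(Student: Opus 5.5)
The plan is to argue by contraposition, using the description of zero divisors in a Noetherian ring as the union of its associated primes. Suppose $h$ is a zero divisor in $G$. Since $G$ is Noetherian, the set of zero divisors is $\bigcup_{\p\in\Ass(G)}\p$, so there is an associated prime $\p\in\Ass(G)$ with $h\in\p$. This first step is just the standard prime-avoidance-free fact: an element killing some nonzero element lies in an associated prime of $G$.

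Next we invoke unmixedness: every associated prime has dimension $\dim G$, so $\dim(G/\p)=r$. Since $h\in\p$, we have $hG\subseteq\p$, and hence a surjection $G/hG\twoheadrightarrow G/\p$. Equivalently, $V(\p)\subseteq V(hG)$ in $\Spec G$. Therefore
\[
\dim(G/hG)\ \geq\ \dim(G/\p)\ =\ r,
\]
which contradicts the hypothesis $\dim(G/hG)<r$. Hence $h$ lies in no associated prime, and so it is a non-zero-divisor.

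There is essentially no obstacle here. The only point to check is the convention for ``dimension of a prime'': we interpret it as $\dim G/\p$, which is the convention implicit in the definition of unmixed just given. With that reading, the monotonicity $\dim(G/hG)\ge\dim(G/\p)$ for $hG\subseteq\p$ is immediate from the inclusion of chains of primes. No finiteness of dimension beyond $\dim G=r$ is needed.
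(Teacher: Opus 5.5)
Your proposal is correct and matches the paper's proof: a zero divisor lies in some $\mathfrak q\in\Ass(G)$, unmixedness gives $\dim(G/\mathfrak q)=r$, and the surjection $G/hG\twoheadrightarrow G/\mathfrak q$ forces $\dim(G/hG)\geq r$, a contradiction.
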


\begin{proof}
An element of a Noetherian ring is a zero divisor if and only if it belongs
to an associated prime. Suppose that $h$ were a zero divisor in $G$. Then
$h\in\mathfrak q$ for some $\mathfrak q\in\operatorname{Ass}(G)$.
Since $G$ is unmixed, $\dim(G/\mathfrak q)=r$. As $h\in\mathfrak q$,
the quotient $G/\mathfrak q$ is a quotient of $G/hG$, and hence
$\dim(G/hG)\geq r$, contradicting the hypothesis.
\end{proof}

We recall the relation between analytic spread
and the fibers of the normal cone.
Let $(S,\mathfrak m)$ be a
Noetherian local ring and let $J\subseteq\mathfrak m$ be an ideal.  Its
{\em fiber cone} and {\em analytic spread} are defined by
\[
 \mathcal F_J(S)
 :=
 \bigoplus_{m\geq0}\frac{J^m}{\mathfrak mJ^m},
 \qquad
 \ell_S(J):=\dim\mathcal F_J(S).
\]
Equivalently,
$\mathcal F_J(S)\cong
\operatorname{gr}_J(S)\otimes_S S/\mathfrak m$.
In particular, for a prime $\mathfrak p\supseteq I$,
\[
 \ell_{R_\mathfrak p}(I_\mathfrak p)
 =
 \dim\!\left(
 \operatorname{gr}_{I_\mathfrak p}(R_\mathfrak p)
 \otimes_{R_\mathfrak p} k(\mathfrak p)
 \right)
 =
 \dim\!\left(
 \bigoplus_{m\geq0}
 \frac{I_\mathfrak p^m}
 {\mathfrak pR_\mathfrak p I_\mathfrak p^m}
 \right).
\]
This is the dimension of the fiber of the normal-cone
morphism $C_I\to V(I)$ over $\mathfrak p$.

We shall also need to understand how analytic spread changes when
independent regular parameters are added to an ideal.  The following
elementary lemma shows that, in this situation, the fiber cone simply
acquires one polynomial variable for each new parameter.

\begin{lemma}
\label{lem:fiber-cone-splitting}
Let $(S_0,\mathfrak m_0,k)$ be a Noetherian local ring and let
$J_0\subseteq\mathfrak m_0$ be an ideal.  Set
$S=S_0[[y_1,\ldots,y_c]]$, with maximal ideal
$\mathfrak m=\mathfrak m_0S+(y_1,\ldots,y_c)$, and let
$J=J_0S+(y_1,\ldots,y_c)$.  Then there is a natural isomorphism of
graded $k$-algebras
\[
        \mathcal F_J(S)
        \cong
        \mathcal F_{J_0}(S_0)[Y_1,\ldots,Y_c].
\]
In particular,
$\ell_S(J)=\ell_{S_0}(J_0)+c$.
\end{lemma}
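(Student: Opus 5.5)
We prove \Cref{lem:fiber-cone-splitting} by induction on $c$, reducing to the case $c=1$.  Indeed, $S_0[[y_1,\ldots,y_c]]=S_1[[y_c]]$ with $S_1=S_0[[y_1,\ldots,y_{c-1}]]$ local Noetherian, and $J=J_1S+(y_c)$ with $J_1=J_0S_1+(y_1,\ldots,y_{c-1})$.  So assume $c=1$, write $y=y_1$, $S=S_0[[y]]$, $J=J_0S+yS$, $\mathfrak m=\mathfrak m_0S+yS$.

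Our plan is to first compute $J^m=\sum_{i+j=m}J_0^iy^jS$ and $\mathfrak mJ^m$.  Every $f\in S$ can be written uniquely as $f=\sum_{j\geq0}a_jy^j$ with $a_j\in S_0$.  Since $J_0^iS$ consists of the power series with all coefficients in $J_0^i$ (because $J_0^i$ is finitely generated), we get the coefficientwise description
\[
 J^m=\{\textstyle\sum_j a_jy^j : a_j\in J_0^{m-j}\ \text{for } j<m\},
\]
with no condition for $j\geq m$.  Similarly,
\[
 \mathfrak mJ^m=\{\textstyle\sum_j a_jy^j : a_j\in\mathfrak m_0J_0^{m-j}\ \text{for } j\leq m\}.
\]
Here we use $\mathfrak m_0J_0^{m-j}+J_0^{m-j+1}=\mathfrak m_0J_0^{m-j}$, since $J_0\subseteq\mathfrak m_0$; for $j=m$ the condition reads $a_m\in\mathfrak m_0$, and coefficients of index $>m$ are unrestricted.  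Taking the quotient gives an isomorphism of $k$-vector spaces
\[
 J^m/\mathfrak mJ^m\;\cong\;\bigoplus_{j=0}^{m}J_0^{m-j}/\mathfrak m_0J_0^{m-j},
\]
sending $\sum a_jy^j$ to $(\bar a_j)_{j\leq m}$.

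This is exactly the degree-$m$ piece of $\mathcal F_{J_0}(S_0)[Y]$ with $\deg Y=1$, via $\bar a_jY^j$.  The map is induced by the inclusions $J_0^{m-j}y^j\subseteq J^m$, so it is natural, and multiplication is clearly compatible: $(ay^j)(by^{j'})=ab\,y^{j+j'}$.  This yields the graded $k$-algebra isomorphism.  Taking Krull dimensions, $\dim A[Y]=\dim A+1$ for the finitely generated $k$-algebra $A=\mathcal F_{J_0}(S_0)$, and hence $\ell_S(J)=\ell_{S_0}(J_0)+c$.

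The main obstacle is the coefficientwise description of $J^m$ and $\mathfrak mJ^m$ in the power series ring.  The inclusion $\supseteq$ needs care for infinite sums.  We handle it by splitting $f=\sum_{j<m}a_jy^j+y^m g$: the finite part clearly lies in the ideal, and the tail $y^mg$ lies in $y^mS\subseteq J^m$, and lies in $\mathfrak mJ^m$ once its constant coefficient $a_m$ is in $\mathfrak m_0$, because $y^{m+1}S\subseteq\mathfrak mJ^m$.  The inclusion $\subseteq$ is checked on generators $uy^j$ with $u\in J_0^{i}S$, $i+j\geq m$, using that $J_0^iS$ is described coefficientwise.  This last fact follows from the flatness of $S$ over $S_0$, or directly from expanding a finite generating set.
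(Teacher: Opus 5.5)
Your proof is correct and is essentially the paper's argument: both describe $J^m$ and $\mathfrak mJ^m$ coefficientwise in the power-series expansion, obtain $J^m/\mathfrak mJ^m\cong\bigoplus J_0^{m-|\alpha|}/\mathfrak m_0J_0^{m-|\alpha|}\,Y^\alpha$, and read off the polynomial-ring structure and the dimension count. The differences are organizational: you reduce to $c=1$ by induction where the paper works with multi-indices $y^\alpha$ directly, and you make explicit two points the paper leaves implicit, namely the coefficientwise description of $J_0^iS$ and the treatment of infinite tails.
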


\begin{proof}
We use the unique expansion of an element of $S$ as a power series
$\sum_{\alpha\in\mathbb N^c}f_\alpha y^\alpha$ with $f_\alpha\in S_0$.
Since $J=J_0S+(y_1,\ldots,y_c)$, we have
\[
        J^m=\sum_{|\alpha|\leq m}y^\alpha J_0^{m-|\alpha|}S.
\]
For $|\alpha|\leq m$, the coefficient of $y^\alpha$ in
$\mathfrak mJ^m$ is $\mathfrak m_0J_0^{m-|\alpha|}$.
Indeed, the contribution from $\mathfrak m_0SJ^m$ is
$\mathfrak m_0J_0^{m-|\alpha|}$. An element of
$(y_1,\ldots,y_c)J^m$ is a sum of terms $y_i g_i$ with $g_i\in J^m$;
to contribute to $y^\alpha$, the corresponding term of $g_i$ has
$y$-degree $|\alpha|-1$, so its coefficient lies in
$J_0^{m-|\alpha|+1} = J_0J_0^{m-|\alpha|}\subseteq 
\mathfrak m_0J_0^{m-|\alpha|}$.
Terms of $y$-degree greater than $m$ vanish in the quotient, since they lie in
$(y_1,\ldots,y_c)J^m$.  Therefore
\[
\frac{J^m}{\mathfrak mJ^m}
\cong
\bigoplus_{|\alpha|\leq m}
\frac{J_0^{m-|\alpha|}}
{\mathfrak m_0J_0^{m-|\alpha|}}Y^\alpha,
\]
where $Y_i$ denotes the class of $y_i$ in $J/\mathfrak mJ$.
Summing over all degrees $m$, we obtain
\[
 \mathcal F_J(S)
 =
 \bigoplus_{m\geq0}\frac{J^m}{\mathfrak mJ^m}
 \cong
 \bigoplus_{m\geq0}
 \bigoplus_{|\alpha|\leq m}
 \frac{J_0^{m-|\alpha|}}
      {\mathfrak m_0J_0^{m-|\alpha|}}\,Y^\alpha .
\]
Setting $n=m-|\alpha|$, this becomes
\[
 \bigoplus_{\alpha\in\mathbb N^c}
 \bigoplus_{n\geq0}
 \frac{J_0^n}{\mathfrak m_0J_0^n}\,Y^\alpha .
\]
The inner sum is $\mathcal F_{J_0}(S_0)$, while the monomials
$Y^\alpha$ are the monomials in $Y_1,\ldots,Y_c$.  These
identifications are compatible with multiplication, and therefore
\[
 \mathcal F_J(S)
 \cong
 \mathcal F_{J_0}(S_0)[Y_1,\ldots,Y_c].
\]
Taking Krull dimensions gives
$\ell_S(J)=\ell_{S_0}(J_0)+c$.
\end{proof}

\section{Coordinate strata and projected lattices}
\label{sec:projected}

We now study the normal cone over the coordinate strata of 
the total coordinate space.  Our goal is to translate the geometric condition in
\Cref{thm:main-criterion} into finitely many inequalities involving the
analytic spreads of projected lattice ideals.
We first recall a convenient presentation of the normal cone.  Let
$I=(f_1,\ldots,f_k)\subseteq R=\K[x_1,\ldots,x_r]$, set
$T=R[s_1,\ldots,s_k]$, and let $Q$ be the kernel of the map
$T\to R[u]$ sending $s_i$ to $f_i u$.  Its image is the Rees algebra
$\mathcal R(I)=R[Iu]$, so $T/Q\cong\mathcal R(I)$.
The associated graded ring is the quotient of the Rees algebra by the
ideal generated by $I$ in degree zero:
$\gr_I(R)\cong \mathcal R(I)/I\mathcal R(I)$.
Since the inverse image
of $I\mathcal R(I)$ in $T$ is $Q+IT$, it follows that
\begin{equation}
\label{eq:graded-rees-presentation}
        \gr_I(R)\cong T/(Q+IT).
\end{equation}
Thus the normal cone $C_I=\Spec\gr_I(R)$ is realized as a closed
subscheme of $\Spec T$.
We next decompose the base into coordinate strata.  For
$A\subseteq[r]$, let $P_A=(x_i:i\in A)$ and write
$x_{A^c}=\prod_{j\notin A}x_j$.  The stratum
\[
        O_A=V(P_A)\cap D(x_{A^c})
\]
is the locus where exactly the coordinates indexed by $A$ vanish.  We
will describe the part of the normal cone lying over
$V(I)\cap O_A$ and then specialize this description to lattice ideals.

\begin{proposition}
\label{prop:coordinate-stratum}
Under the presentation \eqref{eq:graded-rees-presentation}, the closure
in $\Spec T$ of the part of the normal cone lying over
$V(I)\cap O_A$ is defined by
\[
        D_A=(Q+IT+P_AT):x_{A^c}^\infty.
\]
Consequently, if $\mathfrak a\subsetneq R$ is a monomial ideal, then
\begin{equation}
\label{eq:monomial-strata-general}
 \dim\bigl(\gr_I(R)/\mathfrak a\gr_I(R)\bigr)
 =
 \max_{\substack{\mathfrak a\subseteq P_A\\D_A\neq T}}
 \dim(T/D_A).
\end{equation}
For $\mathfrak a=(\Delta)$, where $\Delta=x_1\cdots x_r$, the maximum
is taken over all nonempty subsets $A\subseteq[r]$.
\end{proposition}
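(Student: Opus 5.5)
The plan is to work entirely inside $T$ through the presentation \eqref{eq:graded-rees-presentation}, treating the normal cone $C_I=V(Q+IT)\subseteq\Spec T$. Points of $C_I$ lie over points of $V(I)\subseteq\Spec R$ via the inclusion $R\subseteq T$. The coordinate strata $O_A$, for $A\subseteq[r]$, partition $\Spec R$ set-theoretically. Hence their preimages partition $C_I$ into the locally closed pieces $C_I\cap V(P_AT)\cap D(x_{A^c})$.

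\textbf{Step 1: the ideal $D_A$.} The piece of $C_I$ over $V(I)\cap O_A$ is the closed subscheme $V(Q+IT+P_AT)$ of $\Spec T$ intersected with the open set $D(x_{A^c})$. For any ideal $J\subseteq T$ and element $g$, the scheme-theoretic closure of $V(J)\cap D(g)$ is $V(J:g^\infty)$. This holds because $J:g^\infty$ is the kernel of $T\to (T/J)_g$. Taking $J=Q+IT+P_AT$ and $g=x_{A^c}$ gives the first claim. Note that $D_A=T$ exactly when this piece is empty.

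\textbf{Step 2: the dimension formula.} Let $\mathfrak a$ be a proper monomial ideal. Then $\gr_I(R)/\mathfrak a\gr_I(R)\cong T/(Q+IT+\mathfrak aT)$, and its dimension is the maximum of $\dim T/\mathfrak P$ over minimal primes $\mathfrak P$ of $Q+IT+\mathfrak aT$.

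For such a $\mathfrak P$, let $A=\{i: x_i\in\mathfrak P\}$. Then $P_AT\subseteq\mathfrak P$ and $x_{A^c}\notin\mathfrak P$, since $\mathfrak P$ is prime. A monomial ideal $\mathfrak a$ is contained in a prime exactly when each of its generators has a variable in that prime, so $\mathfrak a\subseteq P_A$. Since $x_{A^c}\notin\mathfrak P$ and $\mathfrak P\supseteq Q+IT+P_AT$, we get $\mathfrak P\supseteq D_A$. In particular $D_A\neq T$, and $\dim T/\mathfrak P\le\dim T/D_A$. This gives the inequality "$\le$".

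For "$\ge$", suppose $\mathfrak a\subseteq P_A$ and $D_A\neq T$. Then
\[
D_A\supseteq Q+IT+P_AT\supseteq Q+IT+\mathfrak aT,
\]
so $T/D_A$ is a quotient of $\gr_I(R)/\mathfrak a\gr_I(R)$. Its dimension is therefore bounded by $\dim(\gr_I(R)/\mathfrak a\gr_I(R))$.

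\textbf{Step 3: the case $\mathfrak a=(\Delta)$.} We have $\Delta\in P_A$ if and only if $A\neq\emptyset$, so the maximum runs over the nonempty $A$. Subsets with $D_A=T$ contribute nothing, and the convention $\dim$ of the zero ring $=-\infty$ makes it harmless to include them formally. Since $\Delta$ is a nonunit, the left-hand side equals $\dim$ of the zero ring exactly when the index set is empty, so the formula is consistent in that case too.

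The main point needing care is Step 1. One has to make sure that "closure of the part lying over $V(I)\cap O_A$" is read scheme-theoretically, which the saturation handles automatically. One also needs the fact that a point of $V(Q+IT+P_AT)$ avoiding $x_{A^c}$ maps into $O_A$. This follows because its contraction to $R$ contains $P_A$ and excludes every $x_j$ with $j\notin A$. The rest is the routine bookkeeping with minimal primes in Step 2.
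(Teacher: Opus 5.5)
Your proposal is correct and follows essentially the same route as the paper: you identify the closure of the piece over $V(I)\cap O_A$ with the saturation $(Q+IT+P_AT):x_{A^c}^\infty$, and you use that the coordinate strata partition $\Spec R$, so the part of the normal cone over $V(\mathfrak a)$ is the union of these pieces. Your minimal-prime bookkeeping in Step~2, which assigns to each minimal prime $\mathfrak P$ the set $A=\{i : x_i\in\mathfrak P\}$, is an explicit algebraic version of the paper's ``finite union of locally closed pieces'' argument, and it also correctly handles the degenerate empty cases.
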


\begin{proof}
Adding $P_AT$ to $Q+IT$ restricts the base of the normal cone to the
coordinate subspace $V(P_A)$, while localizing at $x_{A^c}$ restricts to
$O_A$.  Contracting the resulting ideal from $T_{x_{A^c}}$ back to $T$
amounts to saturation by $x_{A^c}$, which gives $D_A$.

Every point of $\Spec R$ belongs to a unique coordinate stratum.  Since
$\mathfrak a$ is monomial, $O_A$ is contained in $V(\mathfrak a)$
exactly when $\mathfrak a\subseteq P_A$.  The inverse image of
$V(\mathfrak a)$ in the normal cone is therefore the finite union of
the corresponding locally closed pieces, and its dimension is the
maximum of the dimensions of their closures.  This proves
\eqref{eq:monomial-strata-general}.  Finally,
$(\Delta)\subseteq P_A$ if and only if $A\neq\varnothing$.
\end{proof}

We now specialize to lattice ideals.  Let $L\subseteq\Z^r$ be a
lattice and set
\[
        I_L=(x^{u^+}-x^{u^-}:u\in L)\subseteq R.
\]
We shall use the standard facts that
$\operatorname{ht}(I_L)=\operatorname{rk}(L)$ and that, over our
ground field, lattice ideals are radical; see
\cite[Section~2]{EisenbudSturmfels}.  We call $L$ \emph{positive} if
$L\cap\N^r=\{0\}$.  Equivalently, there is a vector in
$\mathbb Q_{>0}^r$ orthogonal to $L$, so $I_L$ is homogeneous for a
positive grading.

For a coordinate set $A\subseteq[r]$, we write
\[
        L_A=\pi_A(L)\subseteq\Z^A,
        \qquad
        L^{A^c}=L\cap\Z^{A^c},
\]
for the coordinate projection of $L$ and the sublattice supported on
$A^c$, respectively.  
The lattice ideal has a natural diagonal symmetry.  Let $\bar e_i$
denote the class of the $i$-th standard basis vector in $\Z^r/L$ and
set
\[
        G_L=\operatorname{Hom}(\Z^r/L,\K^*).
\]
The group $G_L$ acts on $\A^r=\Spec R$ by
$g\cdot(a_1,\ldots,a_r)
=(g(\bar e_1)a_1,\ldots,g(\bar e_r)a_r)$.
Equivalently, this is the action associated with the
$\Z^r/L$-grading of $R$ given by $\deg(x_i)=\bar e_i$.
Let
\[
        \pi:C_{I_L}=\Spec\gr_{I_L}(R)\longrightarrow V(I_L)
\]
be the normal-cone morphism.  For each $A\subseteq[r]$, we denote by
$Z_A=V(I_L)\cap O_A$ the part of the Cox orbit lying in the
corresponding coordinate stratum, and set
\[
        H_A:=\operatorname{Hom}(\Z^{A^c}/L^{A^c},\K^*).
\]
The following proposition shows, in particular, that whenever $Z_A$
is nonempty it is homogeneous under $G_L$; consequently, it is enough
to compute the normal-cone fiber at a single point of the stratum.

\begin{proposition}
\label{prop:quasitorus-strata}
Assume that $L$ is positive.  Then:
\begin{enumerate}[label=\textup{(\roman*)}]
\item the $G_L$-action preserves $V(I_L)$ and $O_A$, and lifts to
      $C_{I_L}$ so that $\pi$ is equivariant;

\item $Z_A\neq\varnothing$ if and only if $L_A$ is positive;

\item if $L_A$ is positive, then $Z_A\cong H_A$ and $G_L$ acts
      transitively on $Z_A$; in particular, $Z_A$ is pure of dimension
      \[
       |A^c|-\rk(L^{A^c})
      \]
      and the fibers of $\pi$ over points of $Z_A$ are mutually isomorphic.
\end{enumerate}
\end{proposition}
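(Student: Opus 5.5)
For (i), I would use the $\Z^r/L$-grading of $R$ given by $\deg x_i=\bar e_i$. Each binomial $x^{u^+}-x^{u^-}$ with $u\in L$ is homogeneous, since $u^+-u^-=u\in L$. Hence $I_L$ is a homogeneous ideal, and $V(I_L)$ is $G_L$-stable. Since $G_L$ acts diagonally by scalars on the coordinates, it preserves each $V(P_A)$ and each $D(x_{A^c})$, so it preserves $O_A$. The grading extends to $\gr_{I_L}(R)=\bigoplus I_L^m/I_L^{m+1}$ because every $I_L^m$ is homogeneous. This gives a $G_L$-action on $C_{I_L}$. The structure map $R/I_L\to\gr_{I_L}(R)$ is graded, so $\pi$ is equivariant.

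For (ii) and (iii), a point of $O_A$ has the form $a$ with $a_i=0$ for $i\in A$ and $a_j\neq0$ for $j\notin A$. I would evaluate the binomials at such a point.

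\begin{itemize}
\item If $u\in L$ and $\pi_A(u)\neq0$, then $u^+$ or $u^-$ involves some $x_i$ with $i\in A$.
\item If exactly one of $\pi_A(u^+)$, $\pi_A(u^-)$ is nonzero, the binomial takes the value $\mp a^{u^\mp}$, which is nonzero. So $a\notin V(I_L)$.
\item If both are nonzero, the binomial vanishes at $a$.
\item If $\pi_A(u)=0$, the binomial restricts to a binomial in the torus $(\K^*)^{A^c}$.
\end{itemize}

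The condition ``$\pi_A(u)\neq0$ and $\pi_A(u)\ge0$ or $\le0$'' is exactly a nonzero element of $L_A\cap\N^A$ up to sign. Hence, if $L_A$ is not positive, some binomial is nonvanishing on all of $O_A$, and $Z_A=\varnothing$.

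Conversely, suppose $L_A$ is positive. Then every $u\in L$ falls into one of two cases. Either $\pi_A(u)=0$, so $u\in L^{A^c}$, or $\pi_A(u)$ has both signs, so the binomial vanishes on $O_A$. Thus
\[
Z_A=\{a\in O_A: a^{u}=1 \text{ for all } u\in L^{A^c}\},
\]
where $a$ is viewed as a point of $(\K^*)^{A^c}$. This is the subgroup $\operatorname{Hom}(\Z^{A^c}/L^{A^c},\K^*)=H_A$. It contains the point with $a_j=1$ for $j\notin A$, so it is nonempty.

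Its dimension is $|A^c|-\rk L^{A^c}$. Since $\K$ has characteristic zero, $H_A$ is a diagonalizable group whose components are tori of that dimension, so $Z_A$ is pure.

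For transitivity, I would note that $G_L$ acts on $Z_A$ through the restriction map $G_L\to H_A$. This map is dual to $\Z^{A^c}/L^{A^c}\to\Z^r/L$. That map is injective, because $L\cap\Z^{A^c}=L^{A^c}$. Since $\K^*$ is a divisible group, $\operatorname{Hom}(-,\K^*)$ is exact, so $G_L\to H_A$ is surjective. Therefore $G_L$ acts on $Z_A\cong H_A$ by translation through a surjection, hence transitively.

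Equivariance of $\pi$ from (i) then makes all fibres over points of $Z_A$ isomorphic. I expect the only delicate point to be the sign analysis in the binomial evaluation; beyond that, the argument is the standard exactness of $\operatorname{Hom}(-,\K^*)$.
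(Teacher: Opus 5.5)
Your proposal is correct and follows essentially the same route as the paper: invariance of the lattice binomials under the diagonal action (phrased by the paper via the explicit automorphisms $\alpha_g$, which is equivalent to your $\Z^r/L$-grading formulation) for (i), and the sign analysis of $\pi_A(u)$ on $O_A$ for (ii). For (iii) you likewise use the surjection $G_L\to H_A$ obtained from the injection $\Z^{A^c}/L^{A^c}\hookrightarrow\Z^r/L$ and the divisibility of $\K^*$.
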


\begin{proof}
\textup{(i)}
For $g\in G_L$, let $\alpha_g$ be the automorphism of $R$ defined by
$\alpha_g(x_i)=g(\bar e_i)x_i$.  If $u\in L$, then $u^+$ and $u^-$
have the same class in $\Z^r/L$, and hence
\[
 \alpha_g(x^{u^+}-x^{u^-})
 =
 g(\overline{u^+})(x^{u^+}-x^{u^-}).
\]
Thus $I_L$, and therefore every power of $I_L$, is $G_L$-stable.
The action consequently descends to $\gr_{I_L}(R)$.  On its
degree-zero part it agrees with the action on $R/I_L$, so the morphism
induced by
$R/I_L=(\gr_{I_L}(R))_0\hookrightarrow\gr_{I_L}(R)$ is
$G_L$-equivariant.  Finally, $O_A$ is preserved because the action
only rescales coordinates by nonzero constants.

\textup{(ii)}
Suppose first that $L_A$ is not positive.  Choose
$0\neq a\in L_A\cap\N^A$ and lift it to $u\in L$.  After possibly
replacing $u$ by $-u$, one monomial of the corresponding lattice
binomial contains an $A$-variable, whereas the other contains none.
On $O_A$ the former vanishes and the latter is a nonzero Laurent
monomial in the $A^c$-variables.  Hence $Z_A$ is empty.

Conversely, assume that $L_A$ is positive.  If the $A$-part of
$u\in L$ is nonzero, then it has both positive and negative
coordinates; otherwise it or its negative would give a nonzero element
of $L_A\cap\N^A$.  Thus both monomials of the associated binomial
vanish on $O_A$.  The only equations that survive on $O_A$ are
therefore those corresponding to $u\in L^{A^c}$.  After identifying
$O_A$ with the torus $(\K^*)^{A^c}$, these equations are
$x^u=1$ for $u\in L^{A^c}$, so
\[
        Z_A\cong
        \operatorname{Hom}(\Z^{A^c}/L^{A^c},\K^*)
        =H_A.
\]
In particular, $Z_A$ is nonempty.

\textup{(iii)}
The inclusion $\Z^{A^c}\hookrightarrow\Z^r$ induces an injection
$\Z^{A^c}/L^{A^c}\hookrightarrow\Z^r/L$.
Since $\K^*$ is divisible, every character of
$\Z^{A^c}/L^{A^c}$ extends to $\Z^r/L$.  Restriction of characters
therefore gives a surjective homomorphism
$G_L\to H_A$.
Under the identification $Z_A\cong H_A$ obtained in \textup{(ii)},
this is precisely the action of $G_L$ on $Z_A$.  Hence the action is
transitive.  Since $H_A$ is a diagonalizable group with character
group $\Z^{A^c}/L^{A^c}$, all its irreducible components have dimension
\[
        \rk(\Z^{A^c}/L^{A^c})
        =
        |A^c|-\rk(L^{A^c}).
\]
\end{proof}

It remains to determine the common dimension of these fibers.  The
local equations transverse to the torus orbit may acquire nontrivial
coefficients; these are conveniently recorded by a partial character.

\begin{definition}
Let $F$ be a field, let $H\subseteq\Z^A$ be a lattice, and let
$\rho:H\to F^*$ be a homomorphism.  The corresponding
\emph{partial-character lattice ideal} is
\[
        I_{H,\rho}
        =
        \bigl(x^{u^+}-\rho(u)x^{u^-}:u\in H\bigr)
        \subseteq F[x_i:i\in A].
\]
The ordinary lattice ideal $I_H$ corresponds to the trivial character.
\end{definition}

\begin{lemma}
\label{lem:remove-character}
Let $F$ be a field.
Let $H\subseteq\Z^A$ be positive and let $\rho:H\to F^*$ be a partial
character.  After a finite field extension $F\subseteq F'$, a diagonal
change of variables identifies
$I_{H,\rho}F'[x_i:i\in A]$ with $I_HF'[x_i:i\in A]$.  In particular,
the two ideals have the same analytic spread at the coordinate maximal
ideal.
\end{lemma}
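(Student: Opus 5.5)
The plan is to find scalars $\lambda_i\in F'^*$ with $\rho(u)=\lambda^{u}:=\prod_{i\in A}\lambda_i^{u_i}$ for every $u\in H$. Once we have them, the substitution $x_i\mapsto \lambda_i x_i$ does the job. For $u\in H$, write $u=u^+-u^-$. Then
\[
x^{u^+}-x^{u^-}\;\longmapsto\;\lambda^{u^+}x^{u^+}-\lambda^{u^-}x^{u^-}
=\lambda^{u^+}\bigl(x^{u^+}-\lambda^{-u}x^{u^-}\bigr).
\]
So the substitution $x_i\mapsto\lambda_i^{-1}x_i$, or the inverse one depending on sign conventions, sends each generator of $I_H$ to a unit multiple of the corresponding generator of $I_{H,\rho}$. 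Hence the diagonal automorphism $\phi$ of $F'[x_i:i\in A]$ maps $I_HF'[x]$ onto $I_{H,\rho}F'[x]$.

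The key step is extending $\rho$ to a character of $\Z^A$, after a finite extension of $F$. Let $\widetilde H=(H\otimes\mathbb Q)\cap\Z^A$ be the saturation of $H$. Then $\widetilde H/H$ is finite and $\Z^A/\widetilde H$ is free.

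First extend $\rho$ from $H$ to $\widetilde H$. Choose a basis $w_1,\dots,w_s$ of $\widetilde H$ and, by the elementary divisor theorem, adapt it so that $H$ has basis $d_1w_1,\dots,d_sw_s$. Then set $\tilde\rho(w_j)$ equal to a $d_j$-th root of $\rho(d_jw_j)$ in a finite extension $F'$; such roots exist in a splitting field of $X^{d_j}-\rho(d_jw_j)$. This gives a homomorphism $\tilde\rho:\widetilde H\to F'^*$ extending $\rho$.

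Next, since $\widetilde H$ is a direct summand of $\Z^A$, extend $\tilde\rho$ trivially on a complement. Finally put $\lambda_i=\tilde\rho(e_i)$. By construction $\rho(u)=\lambda^u$ for all $u\in H$. I expect this divisibility and extension step to be the only real point; everything else is bookkeeping.

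For the analytic spread, $\phi$ is a graded automorphism fixing the coordinate maximal ideal $\m$. It induces an isomorphism of fiber cones $\mathcal F_{I_H}\cong\mathcal F_{I_{H,\rho}}$, so the two ideals have the same analytic spread over $F'$, computed after localizing at $\m$. It remains to check that analytic spread is unchanged under the flat, residue-field-extending base change $F\subseteq F'$. This holds because the fiber cone of the extended ideal is $\mathcal F\otimes_FF'$ (powers and $\m J^m$ commute with field extension), and Krull dimension of a finitely generated graded algebra is preserved under field extension. Thus $\ell(I_{H,\rho})=\ell(I_{H,\rho}F'[x])=\ell(I_HF'[x])=\ell(I_H)$.

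Positivity of $H$ is not needed for the identification itself. It only ensures that the ideals are homogeneous for a positive grading, so the local and graded analytic spreads at $\m$ agree.
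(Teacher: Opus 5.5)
Your proof is correct and follows the paper's argument. You extend $\rho$ to a character of $\Z^A$ after adjoining finitely many roots; your saturation-plus-elementary-divisors step is just the Smith normal form of $H\subseteq\Z^A$ that the paper invokes. You then rescale the variables diagonally, and note that the rescaling fixes the coordinate maximal ideal while the field extension only base-changes the fiber cone. You supply more detail than the paper, namely the sign check showing that $x_i\mapsto\lambda_i^{-1}x_i$ is the right substitution and the verification that the fiber cone commutes with $-\otimes_F F'$, but the route is the same.
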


\begin{proof}
Choose a Smith normal form for the inclusion $H\subseteq\Z^A$.
After adjoining finitely many roots, $\rho$ extends to a character of
$\Z^A$.  If this extension is written as
$u\mapsto\prod_{i\in A}c_i^{u_i}$, the diagonal rescaling
$x_i\mapsto c_i x_i$ removes the coefficients $\rho(u)$ from all the
lattice binomials.  This rescaling preserves the coordinate maximal
ideal, while finite extension of the residue field does not change the
dimension of the fiber cone.
\end{proof}

We can now compute the normal-cone fiber along a coordinate stratum.
The key point is that, after completion, the equations coming from the
torus orbit separate from those determined by the projected lattice.

For $A\subseteq[r]$, set
$S_A=\K[x_i:i\in A]$ and
$\mathfrak m_A=(x_i:i\in A)$.  Whenever $L_A$ is positive, we write
\[
 \ell(I_{L_A})
 :=
 \ell_{(S_A)_{\mathfrak m_A}}
 \bigl((I_{L_A})_{\mathfrak m_A}\bigr)
\]
for the analytic spread of $I_{L_A}$ at the coordinate maximal ideal.

\begin{proposition}
\label{prop:local-product}
Assume that $L$ and $L_A$ are positive. Let $Z$ be an irreducible
component of $V(I_L)\cap O_A$, and let $\mathfrak p_Z\subset R$ be the
prime ideal defining the closure of $Z$ in $\Spec R$. Then
\begin{equation}
\label{eq:local-spread-splitting}
 \ell_{R_{\mathfrak p_Z}}\bigl((I_L)_{\mathfrak p_Z}\bigr)
 =
 \rk(L^{A^c})+\ell(I_{L_A}).
\end{equation}
\end{proposition}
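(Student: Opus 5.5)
We will compute the analytic spread after completing at the point, using that completion preserves the fiber cone. Let $\mathfrak p=\mathfrak p_Z$, $S=R_{\mathfrak p}$, and $\hat S$ its completion. Since $\hat S$ is faithfully flat over $S$ with the same residue field, $\mathcal F_{I\hat S}(\hat S)\cong\mathcal F_{I_S}(S)$. Hence it suffices to describe $I_L\hat S$.

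Concretely, $\kappa(\mathfrak p)=F$ is the function field of $Z$. By \Cref{prop:quasitorus-strata}, $Z$ is a torsor-type subvariety of the torus $(\K^*)^{A^c}$, cut out by $x^u=1$ for $u\in L^{A^c}$. By Cohen's structure theorem, $\hat S\cong F[[x_i:i\in A]][[y_1,\ldots,y_c]]$, where $c=\dim S-|A|=\rk(L^{A^c})$. The $y_j$ are local equations of $Z$ inside the torus directions. To see this, choose a basis $u_1,\ldots,u_c$ of the saturation of $L^{A^c}$; since $L^{A^c}$ has finite index there, $I_{L^{A^c}}$ is locally generated near $Z$ by $x^{u_j}-\zeta_j$ for suitable roots of unity $\zeta_j$. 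Set $y_j=x^{u_j}-\zeta_j$, which is a regular sequence of parameters transverse to the $A$-variables. The $A^c$-variables are units in $S$. The height count $\htop\mathfrak p=|A|+\rk L^{A^c}$ follows from the dimension formula in \Cref{prop:quasitorus-strata}(iii).

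Next, we rewrite the generators. For $u\in L$, write $u=(u_A,u_{A^c})$. The binomial $x^{u^+}-x^{u^-}$ becomes $x_A^{u_A^+}\cdot m_1-x_A^{u_A^-}\cdot m_2$, with $m_1,m_2$ units that are Laurent monomials in the $A^c$-variables. Thus, up to a unit,
\[
x_A^{u_A^+}-x^{-u_{A^c}}\,x_A^{u_A^-}.
\]
Modulo $(y_1,\ldots,y_c)$ the coefficient $x^{-u_{A^c}}$ reduces to an element of $F^*$, namely the value of a well-defined partial character $\rho:L_A\to F^*$. It is well defined because two lifts of the same $u_A$ differ by an element of $L^{A^c}$, on which $x^{u}=1$ on $Z$. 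We then want to show
\[
I_L\hat S=I_{L_A,\rho}\hat S+(y_1,\ldots,y_c).
\]
The generators with $u_A=0$ give exactly $I_{L^{A^c}}$, which equals $(y)$ locally. For the other generators, the coefficient is $\rho(u_A)(1+\text{element of }(y))$.

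This identification is the main obstacle. Substituting power series in the coefficients does not directly preserve the ideal, since the $y$-dependence must be absorbed. The plan is to use a coordinate change instead. Extend $\rho$ to a character of $\Z^A$ with values that are units of $\hat S$ lifting the $y$-dependent coefficients. Concretely, solve for units $c_i\in\hat S^*$ ($i\in A$) with $\prod c_i^{u_i}=x^{-u_{A^c}}$ for all $u\in L$. This is possible after a finite extension and taking roots of units $1+(y)$ in the complete ring, as in \Cref{lem:remove-character}. The diagonal substitution $x_i\mapsto c_ix_i$ is an automorphism of $\hat S$ preserving the maximal ideal, and it transforms $I_L\hat S$ into $I_{L_A}\hat S+(y)$. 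A finite residue extension does not change the dimension of the fiber cone.

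Finally, $F[[x_A]]$ is the completion of $(S_A\otimes F)_{\mathfrak m_A}$, and base change of the field does not alter the fiber cone dimension, since the fiber cone of a binomial ideal is defined over $\K$. Therefore \Cref{lem:fiber-cone-splitting}, applied with $S_0=F[[x_A]]$, $J_0=I_{L_A}S_0$ and $c=\rk(L^{A^c})$, gives
\[
\ell_{R_{\mathfrak p_Z}}\bigl((I_L)_{\mathfrak p_Z}\bigr)=\ell(I_{L_A})+\rk(L^{A^c}).
\]
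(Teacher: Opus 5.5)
Your outline matches the paper's: complete at $\mathfrak p_Z$, take transverse parameters $y_j\in(I_L)_{\mathfrak p_Z}$, reduce the remaining generators to a partial-character ideal $I_{L_A,\rho}$, remove $\rho$ with \Cref{lem:remove-character}, and conclude with \Cref{lem:fiber-cone-splitting}. Your use of a basis of the saturation of $L^{A^c}$ with roots of unity $\zeta_j$, instead of a basis of $L^{A^c}$ with $y_j=x^{u_j}-1$, is a harmless variant.

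The step that fails is the one you single out as the main obstacle. The system
\[
\prod_{i\in A}c_i^{u_i}=x^{-u_{A^c}}\qquad(u\in L)
\]
has no solution in $\hat S^*$, nor after any finite extension of $F$, as soon as $L^{A^c}\neq0$. For $0\neq w\in L^{A^c}$ the left side is $1$. The right side $x^{-w}$ is congruent to $1$ only modulo $(y)$: its linear part in the $y_j$ is nonzero, so $x^{-w}\neq1$. For instance, in \Cref{ex:local-product}, $w=(0,0,0,1,-1)$ gives $x_4^{-1}x_5=(1+y)^{-1}$. In other words, $u\mapsto x^{-u_{A^c}}$ factors through $L\to L_A$ only after reduction to $F^*$, not in $\hat S^*$. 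So the diagonal substitution you describe does not exist, and the claimed passage from $I_L\hat S$ to $I_{L_A}\hat S+(y)$ is unsupported.

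The missing observation is that there is no obstacle, because you already noted $(y_1,\ldots,y_c)\subseteq I_L\hat S$. For $u\in L$ with $a=u_A$, the generator $x_A^{a^+}-x^{-u_{A^c}}x_A^{a^-}$ differs from $x_A^{a^+}-\rho(a)x_A^{a^-}$ by $(x^{-u_{A^c}}-\rho(a))\,x_A^{a^-}\in(y)$. Every $a\in L_A$ lifts to $L$. Hence $I_L\hat S$ and $I_{L_A,\rho}\hat S+(y)$ both contain $(y)$ and have the same image modulo $(y)$, so they are equal. This is exactly the paper's argument.

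After that, \Cref{lem:remove-character} is applied to the constant character $\rho:L_A\to F^*$. This is a rescaling of the $x_i$, $i\in A$, by constants in a finite extension $F'$, and it leaves $(y)$ untouched. If you want a $y$-dependent rescaling, first choose a splitting $s:L_A\to L$, which exists because $L_A$ is free, and impose the equations only for $u=s(a)$. The leftover factors $x^{-w}$, $w\in L^{A^c}$, are then absorbed by $(y)\subseteq I_L\hat S$ anyway, so nothing is gained.
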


\begin{proof}
Set $c=\rk(L^{A^c})$ and let $\kappa=\kappa(Z)$ be the residue field of
$R_{\mathfrak p_Z}$. On the stratum $O_A$, the variables $x_i$ with
$i\in A$ vanish, whereas those with $i\in A^c$ are invertible.

For $u\in L$, write $a=\pi_A(u)$ and $b=\pi_{A^c}(u)$. Since $a$ and
$b$ have disjoint supports, the lattice binomial associated with $u$ is
\[
x^{u^+}-x^{u^-}
=
x_A^{a^+}x_{A^c}^{b^+}
-
x_A^{a^-}x_{A^c}^{b^-}.
\]
If $a\neq 0$, then $a\in L_A\setminus\{0\}$. Since $L_A$ is positive,
$a$ has both a positive and a negative component, so
$a^+\neq 0$ and $a^-\neq 0$. Consequently, each of the two monomials
above contains a variable indexed by $A$, and hence both vanish on
$O_A$.

If $a=0$, then $u\in L^{A^c}$ and the binomial involves only the
$A^c$-variables. In this case it factors as
\[
x_{A^c}^{b^+}-x_{A^c}^{b^-}
=
x_{A^c}^{b^-}\bigl(x_{A^c}^{b}-1\bigr).
\]
All variables indexed by $A^c$ are nonzero on $O_A$, so
$x_{A^c}^{b^-}$ is invertible in the coordinate ring of $O_A$.
Therefore, on $O_A$, this binomial vanishes if and only if
$x_{A^c}^{b}=1$, equivalently $x^u=1$. Thus, inside the torus
$(\K^*)^{A^c}$, the intersection $V(I_L)\cap O_A$ is defined precisely
by the equations
\[
x^u=1,
\qquad
u\in L^{A^c}.
\]
Choose a basis $u_1,\ldots,u_c$ of $L^{A^c}$ and set
$\chi_j=x^{u_j}$ and $y_j=\chi_j-1$. These $c$ equations generate the
restriction of $I_L$ to the $A^c$-torus. Moreover, their logarithmic
differentials $d\chi_j/\chi_j$ correspond to the linearly independent
vectors $u_j$. Since $Z$ has codimension $c$ in this torus by
\Cref{prop:quasitorus-strata}, the elements $y_1,\ldots,y_c$ are local
parameters transverse to $Z$.

Each $y_j$ belongs to $(I_L)_{\mathfrak p_Z}$. Indeed, the lattice
binomial associated with $u_j$ is
\[
x^{u_j^-}(\chi_j-1)=x^{u_j^-}y_j,
\]
and $x^{u_j^-}$ is a unit in $R_{\mathfrak p_Z}$ because it involves
only variables indexed by $A^c$.

The variables $x_i$, $i\in A$, are parameters transverse to the
coordinate stratum, while $y_1,\ldots,y_c$ are parameters transverse
to $Z$ inside that stratum. They therefore form a regular system of
parameters of the regular local ring $R_{\mathfrak p_Z}$. 
Since $R_{\mathfrak p_Z}$ is a regular local ring with residue field
$\kappa$, its completion is a complete equicharacteristic regular local
ring. By the Cohen structure theorem, it contains a coefficient field
isomorphic to $\kappa$, and, since the elements
$x_i$, $i\in A$, together with $y_1,\ldots,y_c$ form a regular system
of parameters, we obtain
\[
\widehat{R_{\mathfrak p_Z}}
\cong
\kappa[[x_i\ (i\in A),y_1,\ldots,y_c]].
\]
In particular, quotienting by $(y_1,\ldots,y_c)$ gives
$\kappa[[x_i:i\in A]]$.
We now determine the image of $(I_L)_{\mathfrak p_Z}$ in this
quotient. Let $u\in L$, with $a=\pi_A(u)$ and
$b=\pi_{A^c}(u)$ as above. Since every $A^c$-variable is a unit, the
corresponding lattice binomial is, up to multiplication by a unit,
\[
x_A^{a^+}-x_{A^c}^{-b}x_A^{a^-}.
\]
Modulo $(y_1,\ldots,y_c)$, the unit $x_{A^c}^{-b}$ becomes its residue
in $\kappa^*$. Denote this residue by $\rho_Z(a)$. It depends only on
$a$: if two elements of $L$ have the same projection to
$\mathbb{Z}^A$, their difference belongs to $L^{A^c}$, and the
corresponding character is equal to $1$ on $Z$. The multiplicativity
of characters also shows that $\rho_Z:L_A\to\kappa^*$ is a
homomorphism.

Thus every lattice binomial maps to a generator
$x_A^{a^+}-\rho_Z(a)x_A^{a^-}$ of the partial-character ideal
$I_{L_A,\rho_Z}$. Conversely, every $a\in L_A$ has a lift in $L$, so
all generators of $I_{L_A,\rho_Z}$ occur in this way. Since
$(y_1,\ldots,y_c)$ is already contained in the localized ideal, this
proves that
\[
(I_L)_{\mathfrak p_Z}\widehat{R_{\mathfrak p_Z}}
=
(y_1,\ldots,y_c)
+
I_{L_A,\rho_Z}\widehat{R_{\mathfrak p_Z}}.
\]

Let $\kappa'/\kappa$ be the finite extension provided by
\Cref{lem:remove-character}. After extending the coefficient field
from $\kappa$ to $\kappa'$ and applying the corresponding diagonal
change of the variables $x_i$, $i\in A$, the partial-character ideal
$I_{L_A,\rho_Z}$ becomes the ordinary lattice ideal $I_{L_A}$. The
extended ideal in
$\kappa'[[x_i\ (i\in A),y_1,\ldots,y_c]]$ is therefore
\[
(y_1,\ldots,y_c)
+
I_{L_A}\cdot
\kappa'[[x_i\ (i\in A),y_1,\ldots,y_c]].
\]

None of the operations just performed changes analytic spread.
Passing from a local ring to its completion leaves its fiber cone
unchanged. Extending the residue field from $\kappa$ to $\kappa'$
only tensors the fiber cone with $\kappa'$, and hence preserves its
Krull dimension. Finally, the diagonal change of variables is a local
automorphism and therefore identifies the corresponding fiber cones.

We may consequently compute the analytic spread after these
operations. By \Cref{lem:fiber-cone-splitting}, adjoining the
independent parameters $y_1,\ldots,y_c$ contributes exactly $c$ to the
analytic spread. The remaining term is the analytic spread of
$I_{L_A}$ in $\kappa'[[x_i:i\in A]]$. This is equal to
$\ell(I_{L_A})$, because extending the coefficient field from $\K$ to
$\kappa'$ base-changes the fiber cone of $I_{L_A}$, while passing to
the completed local ring does not change it. Hence
\[
\ell_{R_{\mathfrak p_Z}}\bigl((I_L)_{\mathfrak p_Z}\bigr)
=
c+\ell(I_{L_A})
=
\rk(L^{A^c})+\ell(I_{L_A}),
\]
as claimed.
\end{proof}

\begin{example}
\label{ex:local-product}
We illustrate \Cref{prop:local-product} on a toric threefold. Let
$N=\Z^3$ and consider
\[
V=
\left(
\begin{array}{rrrrr}
 -2 & 0 &  2 & 0 &  1\\
 -1 & 1 & -1 & 0 &  0\\
  0 & 0 &  0 & 1 & -1
\end{array}
\right).
\]
Its columns $v_1,\ldots,v_5$ are primitive and generate the rays of a
complete three-dimensional fan: the first three form a complete fan in
the plane $z=0$, and the maximal cones are obtained by adjoining
$v_4$ or $v_5$ to each pair of consecutive rays among
$v_1,v_2,v_3$. Let $X$ be the corresponding toric variety and write
$L=L_X\subseteq\Z^5$ for its orbit lattice. The rows of $V$ generate
$L$, and hence
\[
I_X=I_L=
(x_3^2x_5-x_1^2,\;x_2-x_1x_3,\;x_4-x_5).
\]
Take $A=\{1,2,3\}$ and $A^c=\{4,5\}$. Then
$L^{A^c}=\Z(0,0,0,1,-1)$ has rank one, while
$L_A=\langle(-2,0,2),(-1,1,-1)\rangle\subseteq\Z^3$ is a positive
lattice of rank two. It is not saturated, since the gcd of the maximal
minors of the two generators is $2$.

On the stratum $O_A$, the variables $x_1,x_2,x_3$ vanish and
$x_4,x_5$ are invertible. Thus the first two generators of $I_X$
vanish identically, while the third gives $x_4=x_5$. Hence
$Z=V(I_X)\cap O_A$ is a one-dimensional torus, and the prime ideal
defining its closure is
$\mathfrak p_Z=(x_1,x_2,x_3,x_4-x_5)$. Its residue field is
$\kappa(Z)=\K(t)$, where $t$ denotes the common residue of $x_4$ and
$x_5$.

We now follow the proof of \Cref{prop:local-product}. A basis of
$L^{A^c}$ is given by $u=(0,0,0,1,-1)$, whose character on the
$A^c$-torus is $\chi=x_4x_5^{-1}$. Since $\chi=1$ on $Z$, we take
$y=\chi-1$ as the parameter transverse to $Z$. We have
$x_4-x_5=x_5y$, and $x_5$ is a unit in $R_{\mathfrak p_Z}$.
Therefore $x_4-x_5$ and $y$ generate the same ideal locally, and in
particular $y\in(I_X)_{\mathfrak p_Z}$.

The elements $x_1,x_2,x_3,y$ form a regular system of parameters of
$R_{\mathfrak p_Z}$. By the Cohen structure theorem,
\[
\widehat{R_{\mathfrak p_Z}}
\cong
\K(t)[[x_1,x_2,x_3,y]].
\]
Since $x_5=t$ and $x_4=t(1+y)$ in the completion, the extended ideal is
\[
 I_X\widehat{R_{\mathfrak p_Z}}
=(y,\;t x_3^2-x_1^2,\;x_2-x_1x_3).
\]
The generator $y$ is the contribution of $L^{A^c}$. Modulo $(y)$, the
remaining two generators are, up to multiplication by units,
$x_3^2-t^{-1}x_1^2$ and $x_2-x_1x_3$. They therefore define the
partial-character lattice ideal associated with $L_A$, where
$\rho_Z(-2,0,2)=t^{-1}$ and
$\rho_Z(-1,1,-1)=1$.

This example also shows why a finite extension of the residue field
may be needed. Since $L_A$ has index two in its saturation, removing
the coefficient $t^{-1}$ requires, in general, a square root of $t$.
After adjoining $s$ with $s^2=t$ and making the diagonal change of
variables $x_3'=s x_3$ and $x_2'=s x_2$, the last two generators become,
up to multiplication by units, $x_3'^2-x_1^2$ and
$x_2'-x_1x_3'$. Thus the localized ideal acquires the product form
$(y)+I_{L_A}$.

By \Cref{lem:fiber-cone-splitting}, the parameter $y$ contributes one
to the analytic spread. Moreover,
$I_{L_A}=(x_3^2-x_1^2,\;x_2-x_1x_3)$ is a height-two complete
intersection, so $\ell(I_{L_A})=2$. Therefore
\[
\ell_{R_{\mathfrak p_Z}}\bigl((I_X)_{\mathfrak p_Z}\bigr)
=
\rk(L^{A^c})+\ell(I_{L_A})
=
1+2
=
3.
\]

Finally, $\dim Z=|A^c|-\rk(L^{A^c})=1$, so the corresponding part of
the normal cone has dimension $1+3=4$, equivalently
$|A^c|+\ell(I_{L_A})=2+2=4$.
\end{example}

We can now read off the dimension of the normal cone over every
coordinate stratum.

\begin{theorem}
\label{thm:projected-lattice-formula}
Let $L\subseteq\Z^r$ be positive.  If $L_A$ is positive, then
\begin{equation}
\label{eq:stratum-normal-cone-dimension}
 \dim\pi^{-1}\bigl(V(I_L)\cap O_A\bigr)
 =
 |A^c|+\ell(I_{L_A}).
\end{equation}
Consequently, for every proper monomial ideal
$\mathfrak a\subsetneq R$,
\begin{equation}
\label{eq:monomial-boundary-dimension}
 \dim\bigl(\gr_{I_L}(R)/\mathfrak a\gr_{I_L}(R)\bigr)
 =
 \max_{\substack{\mathfrak a\subseteq P_A\\
                  L_A\text{ positive}}}
 \bigl\{|A^c|+\ell(I_{L_A})\bigr\}.
\end{equation}
In particular, the left-hand side is smaller than $r$ if and only if
$\ell(I_{L_A})\leq |A|-1$ for every $A$ occurring in the maximum.
\end{theorem}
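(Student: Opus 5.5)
The plan is to combine the fiber dimension from \Cref{prop:local-product} with the stratum dimension from \Cref{prop:quasitorus-strata}, and then feed the result into \Cref{prop:coordinate-stratum}.

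\textbf{Dimension over a stratum.} Fix $A$ with $L_A$ positive. By \Cref{prop:quasitorus-strata}, $Z_A=V(I_L)\cap O_A$ is nonempty and pure of dimension $|A^c|-\rk(L^{A^c})$. Moreover, $G_L$ acts transitively on $Z_A$, and $\pi$ is equivariant. Hence all fibers of $\pi$ over points of $Z_A$ are isomorphic, so $\pi^{-1}(Z_A)\to Z_A$ is equidimensional. Next take an irreducible component $Z$ of $Z_A$ with generic point $\mathfrak p_Z$. The fiber of $\pi$ over $\mathfrak p_Z$ has dimension $\ell_{R_{\mathfrak p_Z}}((I_L)_{\mathfrak p_Z})$, by the fiber-cone description of analytic spread recalled in \Cref{sec:normal-cone}. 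By \Cref{prop:local-product}, this equals $\rk(L^{A^c})+\ell(I_{L_A})$.

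I then need the dimension of $\pi^{-1}(Z)$ to be the sum of $\dim Z$ and the generic fiber dimension. I would argue this as follows. By generic flatness, or simply by the dimension formula for finite-type morphisms of $\K$-schemes, $\dim\pi^{-1}(Z)$ equals $\dim Z$ plus the generic fiber dimension, provided no component of $\pi^{-1}(Z)$ sits over a proper closed subset of $Z$ with larger fibers. That situation is excluded by homogeneity: the fibers over closed points are all isomorphic, so the fiber dimension is constant on $Z$. By upper semicontinuity together with the generic equality, this constant equals the generic value.

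Summing gives
\[
\dim\pi^{-1}(Z)=|A^c|-\rk(L^{A^c})+\rk(L^{A^c})+\ell(I_{L_A})=|A^c|+\ell(I_{L_A}).
\]
Taking the maximum over the finitely many components $Z$ proves \eqref{eq:stratum-normal-cone-dimension}. I expect the main obstacle to be exactly this step of matching the generic-point fiber with the closed-point fibers, which the $G_L$-transitivity handles.

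\textbf{The monomial formula.} For \eqref{eq:monomial-boundary-dimension}, I apply \eqref{eq:monomial-strata-general}. The closure of the normal cone over $V(I_L)\cap O_A$ is nonempty, that is $D_A\neq T$, exactly when $Z_A\neq\varnothing$. By \Cref{prop:quasitorus-strata}(ii), this happens exactly when $L_A$ is positive. Also, $\dim(T/D_A)$ is the dimension of the closure of $\pi^{-1}(Z_A)$, which equals $\dim\pi^{-1}(Z_A)$. Substituting the first part then gives the maximum formula.

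\textbf{The final equivalence.} The condition $|A^c|+\ell(I_{L_A})<r$ is equivalent to $\ell(I_{L_A})<|A|$, that is, to $\ell(I_{L_A})\le|A|-1$. So the left-hand side of \eqref{eq:monomial-boundary-dimension} is smaller than $r$ exactly when this holds for every $A$ occurring in the maximum.
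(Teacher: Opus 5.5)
Your proposal is correct and follows essentially the same route as the paper: it takes the stratum dimension $|A^c|-\rk(L^{A^c})$ from \Cref{prop:quasitorus-strata} and the generic-point fiber dimension $\rk(L^{A^c})+\ell(I_{L_A})$ from \Cref{prop:local-product}, uses $G_L$-transitivity to get constant fiber dimension, and then applies \Cref{prop:coordinate-stratum}. Your semicontinuity argument matching the generic fiber with the closed fibers, and your check that $D_A\neq T$ exactly when $L_A$ is positive, spell out steps that the paper leaves implicit.
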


\begin{proof}
Let $Z$ be an irreducible component of $V(I_L)\cap O_A$.  By
\Cref{prop:quasitorus-strata}, its dimension is
$|A^c|-\rk(L^{A^c})$, and the normal-cone fibers have constant
dimension along the entire stratum.  Their dimension may therefore be
computed at $\mathfrak p_Z$.  The fiber there is the special fiber of
$(I_L)_{\mathfrak p_Z}$, so \Cref{prop:local-product} gives its dimension as
$\rk(L^{A^c})+\ell(I_{L_A})$.
Adding the dimension of $Z$ and the common fiber dimension gives
\eqref{eq:stratum-normal-cone-dimension}.  Formula
\eqref{eq:monomial-boundary-dimension} follows from
\Cref{prop:coordinate-stratum}.  Finally,
$|A^c|=r-|A|$, so
$|A^c|+\ell(I_{L_A})<r$ is equivalent to
$\ell(I_{L_A})\leq|A|-1$.
\end{proof}

\begin{proof}[Proof of \Cref{thm:main-criterion}]
Apply \Cref{thm:projected-lattice-formula} to $L=L_X$ and
$\mathfrak a=(\Delta)$.  By \Cref{prop:coordinate-stratum}, the
relevant coordinate sets are precisely the nonempty subsets of
$[r]$.  Formula \eqref{eq:monomial-boundary-dimension} therefore proves
the equivalence of \textup{(i)} and \textup{(ii)}.

Let $\bar\Delta$ denote the image of $\Delta=x_1\cdots x_r$ in
$R/I_X=(\gr_{I_X}(R))_0$.  Notice first that
$\Delta\in\B(X)$.  Indeed, the irrelevant ideal $\B(X)$ is generated
by square-free monomials, each of which divides $\Delta$.  If
$\bar\Delta$ is not a zero divisor in $\gr_{I_X}(R)$, then
\Cref{prop:boundary-torsion}, applied with $I=I_X$,
$\mathfrak b=\B(X)$, and $h=\Delta$, yields
\[
\cE(I_X)=\cE^{\mathrm{sat}}_{\B(X)}(I_X).
\]
By the description of the Cox ring of $\Bl_eX$ via the
$\B(X)$-saturated extended Rees algebra, this equality is precisely the
statement that $\Cox(\Bl_eX)$ is generated in multiplicity one.

Finally, suppose that $\gr_{I_X}(R)$ is unmixed and that the equivalent
numerical conditions hold.  Since $\dim \gr_{I_X}(R)=\dim R=r$, condition
\textup{(ii)} gives $\dim(\gr_{I_X}(R)/\Delta \gr_{I_X}(R))<r$.
By \Cref{lem:unmixed-check}, $\bar\Delta$ is not a zero divisor in
$\gr_{I_X}(R)$, and the preceding paragraph completes the proof.
\end{proof}

\section{Fake weighted projective spaces}
\label{sec:rank-one}

Let $X$ be a fake weighted projective space of dimension $d\geq2$.  Its Cox
ring and irrelevant ideal have the form
\[
        R=\K[x_0,\ldots,x_d],
        \qquad
        \B(X)=\m=(x_0,\ldots,x_d).
\]
Although $\Cl(X)$ may have torsion, the free part of the Cox grading gives a
positive $\Z$-grading on $R$.  The toric-point ideal $I_X$ is homogeneous for
this grading, has height $d$, and therefore defines a one-dimensional scheme
in $\A^{d+1}$.

We use the following consequence of the theorem of Cowsik--Nori.  In the form
needed here, it says that if $(A,\n)$ is a regular local ring with infinite
residue field, $J\subset A$ is radical, and $A/J^m$ is Cohen--Macaulay for
all sufficiently large $m$, then $J$ is a complete intersection; see
\cite[Corollary, p.~219]{CowsikNori}.  The generic complete-intersection
hypothesis in the cited formulation is automatic for a radical ideal in a
regular local ring after localization at a minimal prime.

\begin{proof}[Proof of \Cref{thm:main-fwps}]
Since $\B(X)=\m$, multiplicity-one
generation is equivalent to
\begin{equation}\label{eq:rank-one-saturation}
        I_X^m:\m^\infty=I_X^m
        \qquad\text{for every }m\geq1.
\end{equation}

Assume first that \eqref{eq:rank-one-saturation} holds.  Lattice ideals over
an algebraically closed field of characteristic zero are radical
\cite[Section~2]{EisenbudSturmfels}; hence $R/I_X$ is a one-dimensional
positively graded reduced ring.  A homogeneous prime containing $I_X$ is
either minimal over $I_X$ or equal to the homogeneous maximal ideal $\m$.
Indeed, any nonminimal homogeneous prime gives a zero-dimensional positively
graded domain, and such a quotient is concentrated in degree zero.
For every $m$, the saturation condition excludes $\m$ from
$\Ass_R(R/I_X^m)$.  After localization at $\m$, the quotient
\[
        R_\m/(I_XR_\m)^m
\]
has dimension one and its maximal ideal is not associated.  It is therefore
Cohen--Macaulay.  The ideal $I_XR_\m$ is radical.  At each of its minimal
primes $\p R_\m$, localization gives the maximal ideal of the regular local
ring $R_\p$, so it is generically a complete intersection.  The
Cowsik--Nori theorem now implies that $I_XR_\m$ is a complete intersection.
It remains to return from the local ring to the graded polynomial ring.  By
graded Nakayama,
\[
 \mu_R(I_X)=\dim_\K I_X/\m I_X
 =\mu_{R_\m}(I_XR_\m).
\]
Thus $I_X$ is generated by $\htop(I_X)=d$ homogeneous elements.  Since $R$
is Cohen--Macaulay, an ideal whose height equals its number of generators is
generated by a regular sequence; see \cite[Section~2.1]{BrunsHerzog}.
Hence $I_X$ is a complete intersection.

Conversely, suppose that $I_X$ is a complete intersection, generated by a
regular sequence $f_1,\ldots,f_d$.  For a complete-intersection ideal, the canonical map
\[
\operatorname{Sym}_{R/I_X}(I_X/I_X^2)
\longrightarrow
\gr_{I_X}(R)
\]
is an isomorphism. Since $f_1,\ldots,f_d$ is a regular sequence, their
classes form a basis of the free $R/I_X$-module $I_X/I_X^2$.  Hence
$\operatorname{Sym}_{R/I_X}(I_X/I_X^2)$ is a polynomial ring in $d$ variables over
$R/I_X$, and therefore
\[
        \gr_{I_X}(R)\cong (R/I_X)[T_1,\ldots,T_d].
\]
Since $R/I_X$ is one-dimensional and reduced, it is Cohen--Macaulay;
hence $\gr_{I_X}(R)$ is Cohen--Macaulay, and in particular unmixed.
In the present rank-one situation, the only subset $A\subseteq[r]$
for which $L_A$ is positive is $A=[r]$.  Moreover,
$\ell(I_X)=\htop(I_X)=d=r-1$.  Thus condition~\textup{(i)} of
\Cref{thm:main-criterion} holds.  The theorem then shows that
$\bar\Delta$ is not a zero divisor in $\gr_{I_X}(R)$, and consequently
$\Cox(\Bl_eX)$ is generated in Rees multiplicity one.
\end{proof}

We conclude with a computational application of \Cref{thm:main-fwps} to
canonical Fano tetrahedra.  These provide a finite class of fake weighted
projective threefolds in which torsion in the class group occurs naturally.
We first use Demazure-root automorphisms to reduce a large part of the
classification to the complexity-one case, and then apply
\Cref{thm:main-fwps} to the remaining cases.

\begin{proposition}
\label{prop:canonical-fano-tetrahedra}
According to the Graded Ring Database, there are $225$ canonical Fano
tetrahedra in dimension three.  For $131$ of them, an automorphism moves
the torus identity $e$ to a point on a torus-invariant curve; hence the blow-up has
complexity at most one and its Cox ring is generated in Rees multiplicity
one.  Among the remaining $94$ cases, exactly $27$ have
complete-intersection toric-point ideal, and therefore also have Cox ring
generated in Rees multiplicity one.  These $27$ cases are listed below.

\begin{center}
\scriptsize
\setlength{\tabcolsep}{2.5pt}
\renewcommand{\arraystretch}{1.7}

\begin{tabular}{
c c c |@{\qquad}
c c c |@{\qquad}
c c c
}
\toprule
ID & $\Cl(X)$ & grading
&
ID & $\Cl(X)$ & grading
&
ID & $\Cl(X)$ & grading
\\
\midrule

547306 &
$\Z\oplus\Z/2$ &
$\left[\begin{smallmatrix}
2&3&5&2\\
\bar1&\bar0&\bar1&\bar0
\end{smallmatrix}\right]$
&
547307 &
$\Z$ &
$\left[\begin{smallmatrix}6&5&9&4\end{smallmatrix}\right]$
&
547318 &
$\Z$ &
$\left[\begin{smallmatrix}6&7&11&4\end{smallmatrix}\right]$
\\

547352 &
$\Z\oplus\Z/4$ &
$\left[\begin{smallmatrix}
1&2&1&1\\
\bar2&\bar3&\bar1&\bar0
\end{smallmatrix}\right]$
&
547389 &
$\Z\oplus\Z/2\oplus\Z/4$ &
$\left[\begin{smallmatrix}
1&4&2&1\\
\bar1&\bar1&\bar0&\bar0\\
\bar3&\bar0&\bar1&\bar0
\end{smallmatrix}\right]$
&
547398 &
$\Z\oplus\Z/2$ &
$\left[\begin{smallmatrix}
7&14&3&4\\
\bar1&\bar1&\bar0&\bar0
\end{smallmatrix}\right]$
\\

547402 &
$\Z\oplus(\Z/2)^2$ &
$\left[\begin{smallmatrix}
2&6&3&1\\
\bar1&\bar1&\bar0&\bar0\\
\bar1&\bar0&\bar1&\bar0
\end{smallmatrix}\right]$
&
547412 &
$\Z\oplus\Z/2$ &
$\left[\begin{smallmatrix}
5&3&12&4\\
\bar1&\bar0&\bar1&\bar0
\end{smallmatrix}\right]$
&
547413 &
$\Z\oplus\Z/6$ &
$\left[\begin{smallmatrix}
1&1&4&2\\
\bar3&\bar5&\bar1&\bar5
\end{smallmatrix}\right]$
\\

547420 &
$\Z\oplus\Z/2$ &
$\left[\begin{smallmatrix}
3&2&10&5\\
\bar1&\bar0&\bar1&\bar0
\end{smallmatrix}\right]$
&
547423 &
$\Z\oplus\Z/6$ &
$\left[\begin{smallmatrix}
1&3&1&1\\
\bar1&\bar3&\bar2&\bar0
\end{smallmatrix}\right]$
&
547429 &
$\Z\oplus\Z/2\oplus\Z/4$ &
$\left[\begin{smallmatrix}
1&1&1&1\\
\bar1&\bar1&\bar0&\bar0\\
\bar3&\bar0&\bar1&\bar0
\end{smallmatrix}\right]$
\\

547430 &
$\Z\oplus\Z/4$ &
$\left[\begin{smallmatrix}
2&5&2&1\\
\bar1&\bar3&\bar2&\bar0
\end{smallmatrix}\right]$
&
547433 &
$\Z\oplus(\Z/2)^2$ &
$\left[\begin{smallmatrix}
1&1&4&2\\
\bar1&\bar0&\bar0&\bar1\\
\bar1&\bar0&\bar1&\bar0
\end{smallmatrix}\right]$
&
547436 &
$\Z\oplus\Z/3$ &
$\left[\begin{smallmatrix}
2&2&7&3\\
\bar0&\bar2&\bar1&\bar2
\end{smallmatrix}\right]$
\\

547437 &
$\Z\oplus\Z/2$ &
$\left[\begin{smallmatrix}
9&3&2&4\\
\bar0&\bar1&\bar0&\bar1
\end{smallmatrix}\right]$
&
547452 &
$\Z$ &
$\left[\begin{smallmatrix}6&15&5&4\end{smallmatrix}\right]$
&
547460 &
$\Z\oplus\Z/2$ &
$\left[\begin{smallmatrix}
2&3&4&3\\
\bar1&\bar1&\bar0&\bar0
\end{smallmatrix}\right]$
\\

547464 &
$\Z\oplus\Z/4$ &
$\left[\begin{smallmatrix}
2&1&1&4\\
\bar3&\bar3&\bar1&\bar3
\end{smallmatrix}\right]$
&
547465 &
$\Z\oplus\Z/4$ &
$\left[\begin{smallmatrix}
2&4&1&1\\
\bar3&\bar3&\bar2&\bar0
\end{smallmatrix}\right]$
&
547470 &
$\Z\oplus(\Z/2)^2$ &
$\left[\begin{smallmatrix}
1&3&1&1\\
\bar1&\bar1&\bar0&\bar0\\
\bar1&\bar0&\bar1&\bar0
\end{smallmatrix}\right]$
\\

547473 &
$\Z\oplus\Z/4$ &
$\left[\begin{smallmatrix}
1&3&2&2\\
\bar3&\bar0&\bar1&\bar0
\end{smallmatrix}\right]$
&
547478 &
$\Z$ &
$\left[\begin{smallmatrix}17&6&4&7\end{smallmatrix}\right]$
&
547483 &
$\Z\oplus\Z/2$ &
$\left[\begin{smallmatrix}
3&2&7&2\\
\bar0&\bar0&\bar1&\bar1
\end{smallmatrix}\right]$
\\

547495 &
$\Z\oplus\Z/4$ &
$\left[\begin{smallmatrix}
1&1&3&1\\
\bar3&\bar2&\bar1&\bar0
\end{smallmatrix}\right]$
&
547519 &
$\Z\oplus\Z/4$ &
$\left[\begin{smallmatrix}
1&1&1&1\\
\bar3&\bar2&\bar2&\bar1
\end{smallmatrix}\right]$
&
547524 &
$\Z\oplus\Z/4$ &
$\left[\begin{smallmatrix}
1&1&1&3\\
\bar3&\bar2&\bar0&\bar3
\end{smallmatrix}\right]$
\\

\bottomrule
\end{tabular}
\end{center}

\end{proposition}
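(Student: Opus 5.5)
The statement is a computational classification, so the plan is to turn it into a finite, checkable procedure. We start from the Graded Ring Database list of the $225$ canonical Fano tetrahedra. For each entry we record the four primitive ray generators $v_0,\ldots,v_3\in\Z^3$. The toric-point lattice $L_X\subseteq\Z^4$ is then the row lattice of the $3\times4$ matrix $V=(v_0\ \cdots\ v_3)$, and $I_X=I_{L_X}$ is computed as the saturation of the binomial ideal of a lattice basis by $x_0\cdots x_3$. We also read off $\Cl(X)=\Z^4/\operatorname{im}(V^{T})$ together with the images of the $e_i$, which gives the gradings displayed in the table.

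\textbf{First part: Demazure roots.} For each tetrahedron we enumerate the Demazure roots of the fan, that is, the characters $m\in M$ with $\langle m,v_i\rangle=-1$ for one index $i$ and $\langle m,v_j\rangle\ge0$ for $j\ne i$. These are the lattice points of finitely many polytopes, so the enumeration is finite. Each root $m$ gives a $\mathbb G_a$-subgroup of $\operatorname{Aut}(X)$; in Cox coordinates it acts by $x_i\mapsto x_i+\lambda x^{m'}$, where $x^{m'}$ is a monomial in the other variables. Choosing $\lambda$ so that the coordinate $x_i$ of $e$ becomes $0$ moves $e$ onto the divisor $D_i$. We then need a second root, or a composition of root subgroups, to land on a torus-invariant curve $D_i\cap D_j$.

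Once $e$ is moved to a point $p$ of a torus-invariant curve, $\Bl_pX$ carries the action of the two-dimensional stabilizer subtorus, so it has complexity at most one. For such a point the ideal of $p$ in Cox coordinates is, up to monomial units, a principal binomial-type ideal cutting out a complete intersection inside the coordinate stratum. Its saturated powers can then be checked directly: for example, the ideal is generated by $x_i,x_j$ together with one binomial in the remaining two variables, and each of these is a complete intersection whose powers are saturated. This gives multiplicity-one generation for these $131$ cases. The counting step is a machine check that exactly $131$ tetrahedra admit such roots.

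\textbf{Second part: the remaining $94$.} Here \Cref{thm:main-fwps} applies directly, since each remaining $X$ is a fake weighted projective space of dimension $3$. For each of the $94$ cases we compute a minimal homogeneous generating set of $I_X$ in the positive $\Z$-grading. By the graded Nakayama argument already used in the proof of \Cref{thm:main-fwps}, $I_X$ is a complete intersection exactly when $\mu(I_X)=3$. Counting gives $27$ cases with three generators, which are the entries of the table. These have Cox ring generated in Rees multiplicity one, and the other $67$ do not.

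Adding the two parts gives $131+27=158$ cases with multiplicity-one generation. Note that \Cref{thm:main-fwps} would also settle the first $131$ cases if their $I_X$ were complete intersections, but we do not need this.

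\textbf{Main obstacle.} The delicate point is the first part: making rigorous that moving $e$ by an automorphism preserves the isomorphism class of the blow-up, and that complexity at most one forces multiplicity one. For the first fact I would note that $\Bl_eX\cong\Bl_{g(e)}X$ for any $g\in\operatorname{Aut}(X)$. For the second I would argue via the explicit ideal of a point on a torus-invariant curve as above, rather than appealing to general complexity-one theory. I would try this explicit route first. If it fails for some cases, a fallback is to verify $I_p^m:\B^\infty=I_p^m$ through a non-zero-divisor argument as in \Cref{prop:boundary-torsion}: the ideal of $p$ is a complete intersection, its associated graded ring is a polynomial ring over a Cohen--Macaulay ring, and the boundary dimension condition can be checked directly.
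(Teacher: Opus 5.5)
Your treatment of the remaining $94$ cases matches the paper's: compute $I_X$, use $\htop(I_X)=3$ so that being a complete intersection means having three minimal generators, count $27$, and apply \Cref{thm:main-fwps}. The moving step also matches. The paper phrases it as: after reordering, $w_3,w_4$ lie in the semigroup generated by $w_1,w_2$ in $\Cl(X)$. This is exactly the existence of Demazure roots at $v_3,v_4$ whose monomials $M_3,M_4$ avoid $x_3,x_4$. That is the precise condition your ``composition of root subgroups'' has to test, because a root monomial containing an already-zeroed variable cannot move anything.

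The genuine difference is how you get multiplicity one in the $131$ cases. The paper appeals to a complexity-one result. You instead compute the fiber over $p=(1,1,0,0)$ as $I_p=(x_3,x_4,x_1^a-x_2^b)$, where $\Z(a,-b)$ is the kernel of $\Z^2\to\Cl(X)$, $e_i\mapsto w_i$. This is a height-three complete intersection, so every $\Ass(R/I_p^m)$ consists of minimal primes of $I_p$ and never contains $\m$. That argument is correct and self-contained.

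You should, however, make the transfer explicit rather than relying on $\Bl_eX\cong\Bl_pX$ plus a Hausen--Keicher--Laface description at $p$. The root automorphisms lift to a graded automorphism $\phi$ of $R$ with $\phi^{*}(x_i)=x_i-M_i$ for $i=3,4$. This $\phi$ fixes $\m$ and satisfies $\phi^{*}(I_p)=I_X$, so the saturation conditions for $I_X$ and for $I_p$ are equivalent.

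Doing this shows more than you claim. In these $131$ cases $I_X=(x_3-M_3,\,x_4-M_4,\,x_1^a-x_2^b)$ is itself a complete intersection. So \Cref{thm:main-fwps} alone accounts for all $158$ cases, and your remark ``we do not need this'' undersells your own argument.

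Your fallback would fail as phrased. Since $x_3\in I_p$, we have $\Delta\in I_p$, so $\bar\Delta=0$ in $\gr_{I_p}(R)$ and the boundary test with $\Delta$ breaks down. In \Cref{prop:boundary-torsion} you would need $h=x_1$ instead, which is a non-zero-divisor on $(R/I_p)[T_1,T_2,T_3]$.
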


\begin{proof}
Let $R=\K[x_1,x_2,x_3,x_4]$ be the Cox ring of $X$, and write
$w_i=\deg(x_i)\in\Cl(X)$.  Suppose, after reordering the variables,
that both $w_3$ and $w_4$ belong to the semigroup generated by
$w_1,w_2$.  Choose monomials $M_3,M_4\in\K[x_1,x_2]$ with
$\deg(M_3)=w_3$ and $\deg(M_4)=w_4$.  The corresponding Demazure-root
automorphisms act by
$x_3\mapsto x_3+\lambda M_3$ and
$x_4\mapsto x_4+\mu M_4$, respectively, while fixing $x_1,x_2$.
Taking suitable $\lambda$ and $\mu$ sends the Cox representative
$(1,1,1,1)$ of $e$ to $(1,1,0,0)$.

If $v_1,\ldots,v_4$ are the rays of the fan of $X$, the latter point
maps to the torus orbit corresponding to the cone
$\langle v_3,v_4\rangle$.  This orbit has dimension one, so its
connected stabilizer is a two-dimensional subtorus.  It acts on the
blow-up at this point, and hence the blow-up has complexity at most
one.  The complexity-one result therefore gives generation in Rees
multiplicity one.  A direct computation of the $225$ canonical Fano
tetrahedra shows that this applies to $131$ cases.

For each of the remaining $94$ tetrahedra we compute the lattice ideal
$I_X$.  The corresponding lattice has rank three, so
$\htop(I_X)=3$.  Exactly $27$ of these ideals have three minimal generators, and hence
are complete intersections.  By \Cref{thm:main-fwps}, their blow-ups
have Cox ring generated in Rees multiplicity one.  The grading matrices
in the table are obtained from the Smith normal form of the ray matrix.
The Magma code used for these computations is available at
\begin{center}
\href{https://github.com/alaface/multiplicity-one/blob/main/README.md}
{\texttt{github.com/alaface/multiplicity-one}}.
\end{center}
\end{proof}

\section{Projective toric surfaces}
\label{sec:surfaces}

Let $X=X_\Sigma$ be a projective toric surface, and list the primitive
ray generators $v_1,\ldots,v_r\in N\cong\Z^2$ in cyclic order.  Its
orbit lattice
\[
 L_X=\operatorname{im}\bigl(M\longrightarrow\Z^r,\quad
 m\longmapsto
 (\langle m,v_1\rangle,\ldots,\langle m,v_r\rangle)\bigr)
\]
has rank two.  Hence the toric-point ideal $I_X=I_{L_X}$ is a
codimension-two lattice ideal; since $\K$ has characteristic zero, it
is radical.
We shall use two results about codimension-two lattice ideals.

\begin{theorem}
\label{thm:ha-morales-huneke}
Let $J\subseteq\K[y_1,\ldots,y_n]$ be a codimension-two radical lattice
ideal, and let $\ell(J)$ denote its analytic spread at the homogeneous
maximal ideal.  Then
\[
 \ell(J)=
 \begin{cases}
 2,&\text{if $J$ is a complete intersection},\\
 3,&\text{otherwise}.
 \end{cases}
\]
Moreover, if $\K$ is infinite, the Rees algebra $\cR(J)$ and the
associated graded ring $\gr_J(\K[y_1,\ldots,y_n])$ are
Cohen--Macaulay.
\end{theorem}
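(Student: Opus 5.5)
This statement is essentially the theorem of Hà--Morales \cite{HaMorales}, so the plan is to recall the structure of a codimension-two lattice ideal and reduce to the cases treated there, rather than to reprove it from scratch.

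The first step is the bound $2\le\ell(J)\le3$. Since $J$ is homogeneous with respect to a positive grading (its lattice is positive; otherwise it would not be contained in the homogeneous maximal ideal), the usual inequality $\htop(J)\le\ell(J)$ gives the lower bound $\ell(J)\ge2$. For the upper bound, note that $\ell(J)$ is also bounded by the analytic spread of the localization at minimal reductions. The more robust route is to use the explicit structure theory of Peeva--Sturmfels. In codimension two, either $J$ is a complete intersection or $R/J$ has a Hilbert--Burch resolution $0\to R^{k-1}\to R^k\to J\to0$. In the latter case, after a suitable specialization, the minimal generators are determined by a $3\times 2$ (or generic-type) structure for which the fiber cone is a hypersurface ring in $k$ variables of dimension $3$.

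In the complete intersection case, $\gr_J(R)\cong(R/J)[T_1,T_2]$ and the fiber cone is $\K[T_1,T_2]$, so $\ell(J)=2$. This part is routine.

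In the non-complete-intersection case, the task is to show both that $\ell(J)=3$ and that $\cR(J)$ and $\gr_J$ are Cohen--Macaulay. Here I would use that $J$ is of linear type on the punctured spectrum, or more precisely that it satisfies $G_\infty$ and is strongly Cohen--Macaulay, being perfect of codimension two. By Herzog--Simis--Vasconcelos, $\cR(J)$ is then the symmetric algebra and is Cohen--Macaulay. This would force $\ell(J)=n$, which is wrong, so $G_\infty$ must fail. The correct route is instead the Hà--Morales computation: they identify the defining equations of $\cR(J)$ explicitly, show that the fiber cone is a hypersurface of dimension $3$ when $J$ is not a complete intersection, and verify Cohen--Macaulayness through Morey--Ulrich-type criteria ($G_3$ plus a reduction number bound). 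Showing that $\ell(J)\neq2$ is the easier half: if $\ell(J)=2=\htop(J)$, then $J$ has a reduction generated by two elements. Since $J$ is unmixed and generically a complete intersection, this should force $J$ to equal its reduction (via the Cohen--Macaulay, equimultiple argument), and hence $J$ is a complete intersection.

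The main obstacle is the Cohen--Macaulay assertion together with the upper bound $\ell\le3$. For both, I would simply invoke \cite{HaMorales}, checking that their hypotheses (codimension two, lattice ideal, infinite field) match ours.
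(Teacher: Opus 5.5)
Your overall plan is the paper's: you do the complete-intersection case by hand, and you take $\ell(J)\le 3$, $\ell(J)=3$ for non-complete intersections, and the Cohen--Macaulayness of $\cR(J)$ from \cite{HaMorales}. Two things need fixing, and the second is a missing step.

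\textbf{The Hilbert--Burch detour is based on a false claim.} Write $S=\K[y_1,\ldots,y_n]$ and $\m$ for its homogeneous maximal ideal. A codimension-two lattice ideal need not be Cohen--Macaulay. So in general there is no Hilbert--Burch resolution, and $J$ need not be perfect or strongly Cohen--Macaulay. Hence neither your ``$3\times 2$'' sketch of $\ell(J)\le 3$ nor the Herzog--Simis--Vasconcelos route can work.

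A concrete counterexample is the toric ideal of the rational quartic $(s^4:s^3t:st^3:t^4)$,
$J=(y_1y_4-y_2y_3,\,y_2^3-y_1^2y_3,\,y_3^3-y_2y_4^2,\,y_1y_3^2-y_2^2y_4)$. It is a prime codimension-two lattice ideal with four minimal generators and $\operatorname{depth}\K[y_1,\ldots,y_4]/J=1$. Also, a hypersurface ring in $k$ variables has dimension $k-1$, so your fiber-cone description is only consistent when $k=4$. The upper bound $\ell(J)\le 3$ therefore really has to come from the citation.

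Your argument for $\ell(J)\neq 2$ is sound in outline, and it is more self-contained than the paper, which simply cites this. However, the unmixedness you need is that of a two-generated minimal reduction $K$ of $JS_{\m}$, which is a height-two complete intersection, not that of $J$. At each minimal prime $\p$ of $J$, $K_{\p}$ is a reduction of $\p S_{\p}$, the maximal ideal of a regular local ring, so it equals $\p S_{\p}$. Since $J$ is radical, $K=\bigcap_{\p}(K_{\p}\cap S_{\m})=\bigcap_{\p}\p S_{\m}=JS_{\m}$. Graded Nakayama then gives $\mu(J)=2$.

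\textbf{The step to the associated graded ring is missing.} The paper takes only the Cohen--Macaulayness of $\cR(J)$ from Hà--Morales. It then deduces that of $\gr_J(S)$ from \cite[Proposition~1.1]{HunekeAssociatedGraded}: if $S$ and $\cR(J)$ are Cohen--Macaulay and $\htop J>0$, then $\gr_J(S)$ is Cohen--Macaulay.

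This is a depth count at the homogeneous maximal ideal. Put $\cR_+=\bigoplus_{m\geq 1}J^mt^m$, and use $J\cR(J)\cong\cR_+(1)$. There are exact sequences $0\to\cR_+\to\cR(J)\to S\to 0$ and $0\to\cR_+(1)\to\cR(J)\to\gr_J(S)\to 0$. Since $\cR(J)$ is Cohen--Macaulay of dimension $n+1$ and $\operatorname{depth}S=n$, the first sequence gives $\operatorname{depth}\cR_+\ge n+1$. The second then gives $\operatorname{depth}\gr_J(S)\ge n=\dim\gr_J(S)$.

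Your proposal folds both Cohen--Macaulay assertions into a single citation without this step. If you add it and drop the Hilbert--Burch and $G_\infty$ detours, you get exactly the paper's proof.
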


\begin{proof}
Hà--Morales prove the analytic-spread bound and the
Cohen--Macaulayness of the Rees algebra in
\cite[Theorems~5.12 and~5.19]{HaMorales}.  If $J$ is a complete intersection of height two, then
$\gr_J(S)\cong(S/J)[T_1,T_2]$, so its fiber cone is a polynomial
ring in two variables and $\ell(J)=2$.
For a radical codimension-two lattice ideal which is not a complete
intersection, Hà--Morales prove that $\ell(J)=3$.

Finally, since both the polynomial ring and $\cR(J)$ are
Cohen--Macaulay and $J$ has positive height,
\cite[Proposition~1.1]{HunekeAssociatedGraded} implies that
$\gr_J(\K[y_1,\ldots,y_n])$ is Cohen--Macaulay.
\end{proof}

The preceding theorem makes the boundary test of
\Cref{thm:main-criterion} particularly simple for surfaces.

\begin{proposition}
\label{prop:surface-triples}
Let $A\subseteq[r]$ and suppose that the projected lattice
$L_A=\pi_A(L_X)$ is positive.  Then $\ell(I_{L_A})\leq3$, and the
inequality
\[
        \ell(I_{L_A})\leq |A|-1
\]
is automatic unless $|A|=3$.  For a three-element set $A$, this
inequality holds if and only if $I_{L_A}$ is a complete intersection.
Consequently, the numerical condition in
\Cref{thm:main-criterion} is equivalent to requiring that
$I_{L_A}$ be a complete intersection for every three-element subset
$A\subseteq[r]$ for which $L_A$ is positive.
\end{proposition}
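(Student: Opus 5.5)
The plan is to apply \Cref{thm:ha-morales-huneke} to each projected lattice ideal $I_{L_A}$ and then compare the resulting values with $|A|-1$.

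First we determine the rank of $L_A$. Since $L_X$ has rank two, $L_A=\pi_A(L_X)$ has rank at most two. If $L_A$ is positive and nonzero, its rank is at least one. Rank one cannot occur when $|A|\geq 2$ and $L_A$ is positive with $I_{L_A}$ describing a point of a stratum? It is simpler to argue directly. If $\rk L_A\leq 1$, then $I_{L_A}$ is a principal or zero ideal. Indeed, a rank-one lattice ideal is generated by the single binomial of a generator of the saturation up to radical; more precisely, $I_L$ for a rank-one positive lattice $L=\Z u$ is $(x^{u^+}-x^{u^-})$. Hence $\ell(I_{L_A})\leq 1$, and this is at most $|A|-1$ because $L_A\neq 0$ forces $|A|\geq 2$. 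If $L_A=0$, then $I_{L_A}=0$ and $\ell=0\leq|A|-1$ for $A$ nonempty.

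Next suppose $\rk L_A=2$. Then $I_{L_A}$ is a codimension-two lattice ideal in $\K[x_i:i\in A]$, and it is radical since the characteristic is zero. It is also homogeneous for a positive grading, so the analytic spread at $\m_A$ agrees with the graded one. By \Cref{thm:ha-morales-huneke}, $\ell(I_{L_A})\in\{2,3\}$, which gives $\ell\leq3$ in all cases. A height-two ideal needs $|A|\geq2$, and analytic spread is bounded by the ambient dimension, $\ell\leq|A|$. So for $|A|=2$ the ideal is $\m_A$-primary, and in fact $\ell=2$ exceeds $|A|-1=1$. This case needs care: rank two with $|A|=2$ would give a positive rank-two lattice in $\Z^2$, which is impossible, since positivity requires an orthogonal strictly positive vector and hence rank at most $|A|-1$. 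So $|A|\geq3$. If $|A|\geq4$, then $\ell\leq3\leq|A|-1$ automatically. If $|A|=3$, the inequality $\ell\leq2$ holds exactly when $I_{L_A}$ is a complete intersection, by the dichotomy in \Cref{thm:ha-morales-huneke}. The same positivity remark, $\rk L_A\leq|A|-1$, also dispatches the rank-one case when $|A|=1$.

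Finally, combining these cases with condition (i) of \Cref{thm:main-criterion} gives the stated consequence. The only delicate points are the positivity rank bound and checking that the Hà--Morales analytic spread is the local one at the coordinate maximal ideal; the latter follows from gradedness.
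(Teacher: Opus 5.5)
Your proof is correct and follows the same route as the paper: you bound $\rk L_A\le 2$, handle ranks $0$ and $1$ through the zero and principal ideals, rule out a positive rank-two lattice when $|A|\le 2$, and apply the H\`a--Morales dichotomy when $|A|\ge 3$. The differences are minor. You exclude small $|A|$ with the strictly positive orthogonal vector, which gives $\rk L_A\le |A|-1$ in general; the paper instead uses a finite-index argument in $\Z^2$. You also make explicit that the graded and local analytic spreads agree. One thing to fix: delete the abandoned aside suggesting that rank one cannot occur for $|A|\ge 2$, since it is false (a pair of opposite rays gives $L_A=\Z(1,-1)$), although your argument never relies on it.
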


\begin{proof}
Since $\rk(L_X)=2$, every projected lattice $L_A$ has rank at most
two.  If $\rk(L_A)=0$, then $I_{L_A}=0$ and its analytic spread is
zero.  If $\rk(L_A)=1$, then $L_A$ is cyclic, so $I_{L_A}$ is
principal and $\ell(I_{L_A})=1$.

A positive rank-two lattice cannot occur in $\Z^A$ when $|A|\leq2$:
for $|A|=2$, a rank-two sublattice has finite index in $\Z^2$ and
therefore contains a nonzero vector of $\N^2$.  Thus the only
remaining case is $\rk(L_A)=2$ and $|A|\geq3$.  By
\Cref{thm:ha-morales-huneke}, its analytic spread is either $2$ or
$3$, according as $I_{L_A}$ is or is not a complete intersection.

If $|A|\geq4$, this gives
$\ell(I_{L_A})\leq3\leq|A|-1$, so the required inequality is automatic.
For $|A|=3$, it becomes $\ell(I_{L_A})\leq2$, which by the same theorem
is equivalent to $I_{L_A}$ being a complete intersection.
\end{proof}

It remains to relate a positive three-coordinate projection to the
corresponding three-ray toric surface.

\begin{lemma}
\label{lem:triple-dehomogenization}
Let $A\subseteq[r]$ have cardinality three, suppose that $L_A$ is
positive of rank two.  Then:
\begin{enumerate}[label=\textup{(\roman*)}]
\item the projection $L_X\to L_A$ is an isomorphism;
\item setting $x_j=1$ for $j\in {A^c}$ maps $I_X$ onto $I_{L_A}$;
\item $I_X\subseteq P_A^2$ if and only if
      $I_{L_A}\subseteq\mathfrak m_A^2$;
\item the rays indexed by $A$ form a complete three-ray fan.
\end{enumerate}
\end{lemma}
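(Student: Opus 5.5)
We prove the four items in order, working throughout with the description $L_X=\{(\langle m,v_1\rangle,\ldots,\langle m,v_r\rangle):m\in M\}$. Write $A=\{a,b,c\}$.

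For \textup{(i)}, note that $L_X\cong M$ has rank two, since the rays span $N_\mathbb R$. The projection $\pi_A:L_X\to L_A$ is surjective by definition. Its target has rank two by hypothesis, so the kernel has rank zero. A subgroup of rank zero of the free group $L_X$ is trivial, so $\pi_A$ is an isomorphism.

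For \textup{(iv)}, positivity of $L_A$ gives a vector $(\lambda_a,\lambda_b,\lambda_c)\in\mathbb Q_{>0}^3$ orthogonal to $L_A$. This means $\langle m,\lambda_av_a+\lambda_bv_b+\lambda_cv_c\rangle=0$ for all $m\in M$, so $\lambda_av_a+\lambda_bv_b+\lambda_cv_c=0$. The vectors $v_a,v_b,v_c$ span $N_\mathbb R$, because $\rk L_A=2$ forces the map $M\to\Z^A$ to have rank two. Three vectors spanning the plane and satisfying a positive relation are pairwise independent. Hence the three cones spanned by pairs of them, together with the rays and the origin, form a complete fan.

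For \textup{(ii)}, let $u\in L_X$ with $a=\pi_A(u)$ and $b=\pi_{A^c}(u)$. Setting $x_j=1$ for $j\in A^c$ sends $x^{u^+}-x^{u^-}$ to $x_A^{a^+}-x_A^{a^-}$. This is the lattice binomial of $a\in L_A$, so the image of $I_X$ lies in $I_{L_A}$. Conversely, every $a\in L_A$ lifts to some $u\in L_X$, so every generator of $I_{L_A}$ is attained. The substitution is a ring surjection $R\to S_A$, so the image of $I_X$ is an ideal, and it contains all generators of $I_{L_A}$. Therefore the image equals $I_{L_A}$.

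For \textup{(iii)}, the substitution of \textup{(ii)} maps $P_A^2$ onto $\mathfrak m_A^2$. So $I_X\subseteq P_A^2$ gives $I_{L_A}\subseteq\mathfrak m_A^2$ at once.

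The converse is the step needing care. Suppose $I_{L_A}\subseteq\mathfrak m_A^2$. It suffices to show each lattice binomial $f_u=x^{u^+}-x^{u^-}$ lies in $P_A^2$, since these generate $I_X$. Two cases arise.

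\begin{itemize}
\item If $\pi_A(u)\neq0$, then by positivity of $L_A$ both $a^+$ and $a^-$ are nonzero, as in the proof of \Cref{prop:local-product}. Each monomial of $f_u$ then contains an $A$-variable with exponent $a^\pm$.
\item If $\pi_A(u)=0$, then $u=0$ by \textup{(i)}, so $f_u=0$.
\end{itemize}

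In the first case, the binomial $x_A^{a^+}-x_A^{a^-}$ lies in $\mathfrak m_A^2$. Its two monomials are distinct and $\mathfrak m_A^2$ is a monomial ideal, so both monomials lie in $\mathfrak m_A^2$, that is, $|a^+|\geq2$ and $|a^-|\geq2$. Each monomial of $f_u$ therefore has $A$-degree at least two, and so lies in $P_A^2$. Hence $I_X\subseteq P_A^2$.

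The only potentially subtle point is that the binomials $f_u$ generate $I_X$ as an ideal, which holds by the definition of $I_L$. With that in hand, the argument is routine.
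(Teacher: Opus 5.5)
Your proof is correct and follows the paper's route. Item (i) goes through the rank-zero kernel; items (ii) and (iii) use the binomial generators, the fact that setting the $A^c$-variables to one preserves $A$-degrees, and (i) to exclude binomials supported on $A^c$; and item (iv) uses a strictly positive vector orthogonal to $L_A$, which is the Gordan--Stiemke step the paper cites, with the extra care of invoking $\rk L_A=2$ to get that the three rays span $N_\mathbb R$, a point the paper leaves implicit.
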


\begin{proof}
The kernel of $L_X\to L_A$ is $L_X\cap\Z^{A^c}$.  Since both $L_X$ and
$L_A$ have rank two, this kernel has rank zero and is therefore
trivial.  This proves \textup{(i)}, and \textup{(ii)} follows from the
binomial description of lattice ideals.

For a lattice binomial, membership in $P_A^2$ means that each of its
two monomials has total $A$-degree at least two.  Setting the
${A^c}$-variables equal to one does not alter these degrees, and the
projection $L_X\to L_A$ is an isomorphism.  This proves
\textup{(iii)}.

Finally, positivity of $L_A$ means that there is no nonzero
$m\in M_\mathbb R$ which is nonnegative on all three selected rays.
By the Gordan--Stiemke theorem of alternatives, there are positive
rational numbers $c_i$, $i\in A$, such that
$\sum_{i\in A}c_iv_i=0$.  Thus the three rays positively span
$N_\mathbb R$ and determine a complete fan.
\end{proof}

\begin{proof}[Proof of \Cref{thm:main-surface}]
The implication \textup{(i)}$\Rightarrow$\textup{(ii)} is
\cite[Theorem~3.5]{LU-minimal}.  We prove the converse.

Assume \textup{(ii)}.  By \Cref{prop:surface-triples}, it is enough to
show that $I_{L_A}$ is a complete intersection whenever $A$ has three
elements and $L_A$ is positive.  There is nothing to prove if
$\rk(L_A)\leq1$, so suppose that $\rk(L_A)=2$.

By \Cref{lem:triple-dehomogenization}, the assumption
$I_X\not\subseteq P_A^2$ gives
$I_{L_A}\not\subseteq\mathfrak m_A^2$, and the selected rays form a
complete three-ray fan.  Hence
\cite[Proposition~3.4]{LU-minimal} implies that $I_{L_A}$ is a
complete intersection.  The numerical inequalities of
\Cref{thm:main-criterion} therefore hold for every $A$.

Since $I_X$ is a codimension-two radical lattice ideal,
\Cref{thm:ha-morales-huneke} shows that
$\gr_{I_X}(R)$ is Cohen--Macaulay, hence unmixed.
The unmixed part of \Cref{thm:main-criterion} now gives
multiplicity-one generation.  Thus
\textup{(ii)}$\Rightarrow$\textup{(i)}.

We finally compare \textup{(ii)} and \textup{(iii)}.  Suppose first
that a triple $A$ determines a complete three-ray fan.  Then $L_A$ is
positive of rank two, and the fan defines a rank-one toric surface
$X_A$ together with the toric contraction $X\to X_A$.
By \Cref{lem:triple-dehomogenization}, dehomogenizing the variables
outside $A$ identifies the toric-point ideal of $X_A$ with $I_{L_A}$
and gives
\[
 I_X\not\subseteq P_A^2
 \quad\Longleftrightarrow\quad
 I_{L_A}\not\subseteq\mathfrak m_A^2.
\]
By \cite[Proposition~3.4]{LU-minimal}, the latter condition is
equivalent to multiplicity-one generation for $\Bl_eX_A$.

If the selected rays do not form a complete fan, then $L_A$ is not
positive.  Choose a nonzero vector in $L_A\cap\N^A$ and lift it to
$u\in L_X$.  In the corresponding lattice binomial one monomial has
total $A$-degree zero, so the binomial does not belong to $P_A^2$.
Thus condition \textup{(ii)} is automatic for such triples.  The only
triples that need to be tested are therefore exactly the three-ray
contractions occurring in \textup{(iii)}, which proves
\textup{(ii)}$\Leftrightarrow$\textup{(iii)}.
\end{proof}

We now specialize to minimal toric surfaces of Picard rank two.  Such
a surface has four rays.  After a change of lattice basis, their
primitive generators can be written in cyclic order as
\[
       (1,0),\quad (a,b),\quad (-1,0),\quad (-a,-b),
        \qquad b>0,\quad\gcd(a,b)=1;
\]
see \cite[Section~4]{LU-minimal}.  The orbit lattice contains
$(b,0,-b,0)$ and $(0,b,0,-b)$, and hence
\begin{equation}
\label{eq:opposite-ray-binomials}
        x_1^b-x_3^b,\qquad x_2^b-x_4^b\in I_X.
\end{equation}

\begin{proof}[Proof of \Cref{cor:main-minimal}]
Let $A\subseteq\{1,2,3,4\}$ be a triple and let $j$ be the omitted
index.  If $j=1$ or $j=3$, the first binomial in
\eqref{eq:opposite-ray-binomials} has a monomial involving only the
omitted variable, so it does not belong to $P_A^2$.  If $j=2$ or
$j=4$, the second binomial gives the same conclusion.  Thus
$I_X\not\subseteq P_A^2$ for every triple, and
\Cref{thm:main-surface} applies.
\end{proof}

\subsection{Partial resolutions of Gorenstein toric Fano surfaces}

A lattice polygon $P\subset N_\mathbb R$ is \emph{reflexive} if the origin
is its unique interior lattice point and its polar
\[
        P^\vee
        =\{m\in M_\mathbb R:\langle m,v\rangle\geq-1
          \text{ for every }v\in P\}
\]
is again a lattice polygon.  If $Y$ is a Gorenstein toric Fano surface, the
convex hull $P$ of the primitive ray generators of its fan is reflexive.  A
toric partial resolution $X\to Y$ is obtained by subdividing the fan using
some of the rays through lattice points of $\partial P$.

Let
\[
        Q_{\mathrm{exc}}
        =\operatorname{conv}(e_1,e_2,-e_1-e_2)\subset M_\mathbb R
\]
and let $P_{\mathrm{exc}}=Q_{\mathrm{exc}}^\vee$.  The vertices of the polar
triangle are
\begin{equation}\label{eq:exceptional-rays}
        (2,-1),\qquad (-1,-1),\qquad (-1,2).
\end{equation}
We denote by $X_{\mathrm{exc}}$ the rank-one toric surface defined by the
complete fan with these three rays.

\begin{proposition}
\label{prop:gorenstein-partial-resolutions}
Let $X\to Y$ be a toric partial resolution of a Gorenstein toric Fano surface.
Then
\[
 \Cox(\Bl_eX)\text{ is generated in Rees multiplicity one}
 \quad\Longleftrightarrow\quad
 X\text{ does not dominate }X_{\mathrm{exc}}.
\]
Here domination means that the fan of $X$ refines the three-ray fan of
$X_{\mathrm{exc}}$, so that the identity on the lattice induces a toric
birational morphism $X\to X_{\mathrm{exc}}$.
\end{proposition}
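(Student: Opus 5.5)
The plan is to reduce the statement to \Cref{thm:main-surface}\textup{(iii)} and then to a finite check on three-ray fans. By that theorem, $\Cox(\Bl_eX)$ is generated in Rees multiplicity one if and only if $\Cox(\Bl_eX_A)$ is, for every three-ray contraction $X\to X_A$. In the proof of \Cref{thm:main-surface} this condition was shown, via \Cref{lem:triple-dehomogenization} and \cite[Proposition~3.4]{LU-minimal}, to be equivalent to $I_{L_A}\not\subseteq\mathfrak m_A^2$. By \Cref{lem:triple-dehomogenization}\textup{(i)} the projection identifies $L_A$ with $L_X\cong M$. A lattice binomial lies outside $\mathfrak m_A^2$ exactly when one of its monomials is a single variable. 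So the test for a triple $v_i,v_j,v_k$ becomes the following: there is an index, say $i$, and some $m\in M$ with
\[
\langle m,v_i\rangle=1,\qquad \langle m,v_j\rangle\le0,\qquad \langle m,v_k\rangle\le0 .
\]
In that case the binomial $x_i-x_j^{a}x_k^{b}$ lies in $I_{L_A}$. Both sides of the proposition are invariant under $\mathrm{GL}_2(\Z)$, and domination is read up to this equivalence. Hence it suffices to show that, among the three-ray fans whose rays are rays of $X$, this test fails exactly for the triple \eqref{eq:exceptional-rays}.

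The next step is to bound the possible triples. The rays of $X$ are primitive lattice points on $\partial P$, where $P$ is reflexive. Let three of them positively span $N_\mathbb R$. Their convex hull $T$ is a lattice triangle contained in $P$ and containing the origin in its interior. Since every nonzero lattice point of $P$ lies on $\partial P$, the origin is the unique interior lattice point of $T$. Lattice triangles with exactly one interior lattice point are, up to $\mathrm{GL}_2(\Z)$, the five reflexive triangles among the sixteen reflexive polygons. The vertices of $T$ are primitive because they are rays. So each triple is equivalent to the vertex set of one of these five triangles. I would list them explicitly, for instance $\{(1,0),(0,1),(-1,-1)\}$, $\{(1,0),(0,1),(-1,-2)\}$, $\{(1,0),(1,2),(-1,-2)\}$-type representatives, and \eqref{eq:exceptional-rays}. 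Representatives whose vertices are not primitive are discarded.

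For each remaining representative I will exhibit the required $m$ directly. For the first triangle, $m=(1,0)$ gives value $1$ on $(1,0)$, value $0$ on $(0,1)$ and value $-1$ on $(-1,-1)$. The other non-exceptional cases are similar one-line checks, each yielding a binomial of the form $x_i-x_j^ax_k^b$. For \eqref{eq:exceptional-rays}, write $m=(a,b)$ and take $\langle m,(2,-1)\rangle=1$. Then $b=2a-1$, and the two remaining inequalities force $1/3\le a\le 2/3$, which has no integer solution. By the threefold symmetry of the triangle, the same holds for the other two choices of $i$. Thus $I_{L_A}\subseteq\mathfrak m_A^2$, and multiplicity-one generation fails exactly when some triple of rays of $X$ is equivalent to \eqref{eq:exceptional-rays}, which is exactly domination of $X_{\mathrm{exc}}$.

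The main obstacle I expect is the classification step. I must be sure that every triple of boundary rays spanning a complete fan gives a triangle with a single interior point. I must also avoid missing a triangle type or a non-primitive degenerate case. To do this I would cite the classification of the sixteen reflexive polygons and read off the five triangles. I would also check that $X_{\mathrm{exc}}$ occurs in the setting of the proposition: it is itself Gorenstein with $P=P_{\mathrm{exc}}$. This makes the equivalence meaningful in both directions.
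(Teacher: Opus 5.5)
Your proposal is correct, but it handles the classification step differently from the paper.

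The paper never leaves $P$. If some facet of $P$ contains exactly one of the three chosen points, the vertex of $P^\vee$ dual to that facet is the required $m$. Otherwise, walking along $\partial P$ shows that $P$ itself is the triangle $\operatorname{conv}(v_1,v_2,v_3)$. In that case an interior lattice point on a side of $Q=P^\vee$ supplies $m$, and if no side has one, Pick's theorem forces $Q\cong Q_{\mathrm{exc}}$.

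You instead pass to the subtriangle $T=\operatorname{conv}(v_1,v_2,v_3)$, note that the origin is its only interior lattice point, and finish with a classification and a finite check. Your route is shorter and uses less of the structure of $P$, but it rests on an external classification. The paper's argument needs only Pick's theorem and polar duality. It also shows that in the obstructed case $P$ itself equals $P_{\mathrm{exc}}$, so $Y\cong X_{\mathrm{exc}}$.

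Three points to tighten when you write it out.

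\textbf{The fact to cite.} The list of the sixteen reflexive polygons alone does not suffice. What you need is the two-dimensional fact that every lattice polygon whose only interior lattice point is the origin is reflexive.

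\textbf{The representatives.} Your list is garbled: $\{(1,0),(1,2),(-1,-2)\}$ does not positively span $N_\mathbb R$, since the origin lies on the boundary of its hull. Up to $\mathrm{GL}_2(\Z)$, the five triangles have vertex sets
$\{(1,0),(0,1),(-1,-1)\}$, $\{(1,0),(0,1),(-1,-2)\}$, $\{(1,0),(0,1),(-2,-3)\}$, $\{(-1,-1),(-1,1),(3,-1)\}$, and \eqref{eq:exceptional-rays}. Their vertices are automatically primitive: a lattice point strictly between $0$ and a vertex would be a second interior point. So your discarding step is vacuous.

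\textbf{The case check.} For the first three triangles, $m=(1,0)$ works. The fourth triangle, with eight boundary points, is the only non-obvious case. The test fails at $(-1,-1)$ and at $(3,-1)$, by the same kind of computation you did for the exceptional triangle. It succeeds only at $(-1,1)$, with $m=(0,1)$, giving the binomial $x_2-x_1x_3$.

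With these filled in, your argument is complete.
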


\begin{proof}
Let $P$ be the reflexive polygon associated with $Y$.  By
\Cref{thm:main-surface}, it is enough to consider three rays of the fan of
$X$ that form a complete fan.  Write their primitive generators as
$v_1,v_2,v_3\in\partial P$.

Suppose that a facet $F$ of $P$ contains exactly one of these points, say
$v_1$.  Let $m\in P^\vee$ be the vertex dual to $F$.  Then
\[
        \langle m,v_1\rangle=-1,
        \qquad
        \langle m,v_2\rangle,\langle m,v_3\rangle\geq0.
\]
The lattice binomial associated with $m$ therefore has $x_1$ as one of its
monomials.  In particular, the toric-point ideal of the three-ray surface is
not contained in $(x_1,x_2,x_3)^2$, so this triple causes no obstruction.

We may consequently assume that no facet of $P$ contains exactly one of the
three selected points.  Arrange $v_1,v_2,v_3$ in cyclic order along
$\partial P$.  Consecutive selected points must lie on a common facet:
otherwise the first facet leaving one of them toward the next would contain
exactly that selected point.  Thus each of the pairs
$(v_1,v_2)$, $(v_2,v_3)$, and $(v_3,v_1)$ lies on a facet.  Since the three
vectors positively span $N_\mathbb R$, these facets are distinct.  It follows
that $P$ is a triangle and that $v_1,v_2,v_3$ are its vertices.

Set $Q=P^\vee$.  If one side of $Q$ contains an interior lattice point $m$,
then that side is dual to one of the vertices, say $v_1$, and
\[
        \langle m,v_1\rangle=-1,
        \qquad
        \langle m,v_2\rangle,\langle m,v_3\rangle\geq0.
\]
Again the associated lattice binomial has a linear term, so there is no
obstruction.

The only remaining possibility is that every side of the reflexive triangle
$Q$ has lattice length one.  Then $Q$ has three boundary lattice points and
one interior lattice point.  Pick's theorem gives normalized area three.
The three triangles obtained by joining the origin to the sides of $Q$ have
positive integral normalized areas summing to three, so each has area one.
After a lattice change of coordinates, two consecutive vertices are
$e_1,e_2$; the determinant-one conditions for the other two sides force the
third vertex to be $-e_1-e_2$.  Hence
$Q\cong Q_{\mathrm{exc}}$ and $P\cong P_{\mathrm{exc}}$.

For the exceptional fan, the toric-point ideal is
\begin{equation}\label{eq:exceptional-ideal}
 I_{\mathrm{exc}}
 =\bigl(
 x_1x_3-x_2^2,\;
 x_1x_2-x_3^2,\;
 x_1^2-x_2x_3
 \bigr).
\end{equation}
It is contained in $(x_1,x_2,x_3)^2$, so
\Cref{thm:main-surface} shows that multiplicity-one generation fails.  Thus a
partial resolution fails the triple test exactly when its fan contains a
three-ray coarsening lattice-equivalent to the exceptional fan, which is
precisely the asserted domination condition.
\end{proof}

\begin{figure}[ht]
\centering
\begin{tikzpicture}[scale=1.05]
  \begin{scope}[shift={(-3.2,0)}]
    \draw[->,gray!60] (-1.8,0)--(1.8,0);
    \draw[->,gray!60] (0,-1.8)--(0,1.8);
    \foreach \i in {-1,0,1}
      \foreach \j in {-1,0,1}
        \fill (\i,\j) circle (0.035);
    \draw[very thick] (1,0)--(0,1)--(-1,-1)--cycle;
    \fill (1,0) circle (0.07) node[above right] {$e_1$};
    \fill (0,1) circle (0.07) node[above right] {$e_2$};
    \fill (-1,-1) circle (0.07) node[below left] {$-e_1-e_2$};
    \node at (0,-2.15) {$Q_{\mathrm{exc}}$};
  \end{scope}
  \begin{scope}[shift={(3.2,0)},scale=0.62]
    \draw[->,gray!60] (-2.5,0)--(2.8,0);
    \draw[->,gray!60] (0,-2.5)--(0,2.8);
    \foreach \i in {-2,-1,0,1,2}
      \foreach \j in {-2,-1,0,1,2}
        \fill (\i,\j) circle (0.045);
    \draw[very thick] (2,-1)--(-1,-1)--(-1,2)--cycle;
    \fill (2,-1) circle (0.10) node[right] {$(2,-1)$};
    \fill (-1,-1) circle (0.10) node[below left] {$(-1,-1)$};
    \fill (-1,2) circle (0.10) node[above left] {$(-1,2)$};
    \node at (0,-3.35) {$P_{\mathrm{exc}}=Q_{\mathrm{exc}}^\vee$};
  \end{scope}
\end{tikzpicture}
\caption{The exceptional reflexive triangle and its polar.  The vertices of
the polar are the ray generators of $X_{\mathrm{exc}}$.}
\label{fig:exceptional-polar-pair}
\end{figure}

The exceptional case admits a completely explicit saturated Rees algebra.

\begin{proposition}
\label{prop:exceptional-cox-ring}
Let $R=\K[x_1,x_2,x_3]$, let $I_{\mathrm{exc}}$ be the ideal in
\eqref{eq:exceptional-ideal}, and put
\[
        g=x_1^3+x_2^3+x_3^3-3x_1x_2x_3.
\]
Then
\begin{equation}\label{eq:exceptional-saturated-powers}
 I_{\mathrm{exc}}^m:(x_1,x_2,x_3)^\infty
 =\sum_{q=0}^{\lfloor m/2\rfloor}
   g^q I_{\mathrm{exc}}^{m-2q}
 \qquad(m\geq0).
\end{equation}
Consequently,
\[
 \Cox(\Bl_eX_{\mathrm{exc}})
 =R\bigl[t,I_{\mathrm{exc}}t^{-1},g t^{-2}\bigr].
\]
The element $gt^{-2}$ is necessary, so the first additional Rees
multiplicity is exactly two.
\end{proposition}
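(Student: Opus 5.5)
The plan is to linearize the problem with a change of coordinates that turns $I_{\mathrm{exc}}$ into a monomial ideal, and then verify \eqref{eq:exceptional-saturated-powers} by a combinatorial check on exponents.

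\emph{Step 1: reduction to three coordinate points.} Let $\omega$ be a primitive cube root of unity; it lies in $\K$ because $\K$ is algebraically closed of characteristic zero. Introduce the linear forms
\[
y_1=x_1+x_2+x_3,\qquad
y_2=x_1+\omega x_2+\omega^2x_3,\qquad
y_3=x_1+\omega^2x_2+\omega x_3 .
\]
These form an invertible linear change of coordinates, and $g=y_1y_2y_3$ is the classical factorization. I would check that $V(I_{\mathrm{exc}})$ is the union of the three lines through $(1,1,1)$, $(1,\omega,\omega^2)$ and $(1,\omega^2,\omega)$, for instance by solving $x_1^2=x_2x_3$ and its cyclic analogues. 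In the new coordinates these lines become the three coordinate axes. Since $I_{\mathrm{exc}}$ is radical of height two (as a lattice ideal) and is generated by three quadrics, comparing with the ideal of the axes gives
\[
I_{\mathrm{exc}}=(y_1y_2,\;y_1y_3,\;y_2y_3).
\]
One can also verify this directly on the generators. This identification is the step needing the most care, since the whole computation rests on it. The homogeneous maximal ideal $(x_1,x_2,x_3)=(y_1,y_2,y_3)$ is unchanged.

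\emph{Step 2: saturation equals the symbolic power.} Let $J=(y_1y_2,y_1y_3,y_2y_3)$. Then $J=\bigcap_{i<j}(y_i,y_j)$, and $J^m$ is a homogeneous ideal of dimension one. Its associated primes are therefore the minimal primes $(y_i,y_j)$, together possibly with the maximal ideal. Consequently
\[
J^m:\mathfrak m^\infty=J^{(m)}=\bigcap_{i<j}(y_i,y_j)^m .
\]
Both sides of \eqref{eq:exceptional-saturated-powers} are monomial ideals. So the claim reduces to the following statement about a monomial $y^a$: if $a_i+a_j\ge m$ for all $i\ne j$, then $y^a\in\sum_q (y_1y_2y_3)^qJ^{m-2q}$.

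\emph{Step 3: the monomial check.} The inclusion $\supseteq$ is clear: $g\in J^{(2)}$ and $J\subseteq J^{(1)}$, and symbolic powers multiply. For $\subseteq$, I would argue by induction on $m$, with $m=0,1$ trivial. Take $y^a$ with all pairwise sums at least $m\ge2$.
\begin{itemize}
\item If some $a_i=0$, say $a_3=0$, then $a_1,a_2\ge m$, so $y^a\in(y_1y_2)^m\subseteq J^m$.
\item If all $a_i\ge1$, let $a'=a-(1,1,1)$. Its pairwise sums are at least $m-2$, so by induction $y^{a'}\in J^{(m-2)}=\sum_q g^qJ^{m-2-2q}$. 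Multiplying by $g$ gives $y^a\in\sum_q g^{q+1}J^{m-2(q+1)}$.
\end{itemize}
This closes the induction.

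\emph{Step 4: the Cox ring and necessity of $gt^{-2}$.} Since $X_{\mathrm{exc}}$ has Picard number one, $\B(X_{\mathrm{exc}})=\mathfrak m$. The formula of Hausen--Keicher--Laface together with \eqref{eq:exceptional-saturated-powers} then shows that the saturated extended Rees algebra is generated by $R$, $t$, $I_{\mathrm{exc}}t^{-1}$ and $gt^{-2}$: every homogeneous element $g^qf\,t^{-m}$ with $f\in I_{\mathrm{exc}}^{m-2q}$ factors as $(gt^{-2})^q\,(ft^{-(m-2q)})$. Finally, $gt^{-2}$ is necessary. The only elements of $R[t,I_{\mathrm{exc}}t^{-1}]$ of $t$-degree $-2$ are those of $I_{\mathrm{exc}}^2$, which is generated in degree $4$. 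Since $g$ is a nonzero cubic, $g\notin I_{\mathrm{exc}}^2$, so the first additional Rees multiplicity is exactly two.
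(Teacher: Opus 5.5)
Your proposal is correct and follows essentially the same route as the paper. Both use the cube-root-of-unity change of coordinates that turns $I_{\mathrm{exc}}$ into $(uv,uw,vw)$ with $g=uvw$, identify the saturation with the symbolic power $\bigcap(u,v)^m$, run the same induction that either finds a vanishing exponent or divides by $uvw$, and prove necessity of $gt^{-2}$ by observing that $I_{\mathrm{exc}}^2$ contains no nonzero cubic. The only difference is that you justify two steps the paper merely asserts: the coordinate identification (via radicality and the zero locus) and the equality of saturation with symbolic power (via associated primes).
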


\begin{proof}
Choose a primitive cube root of unity $\omega\in\K$.  The projective scheme
defined by $I_{\mathrm{exc}}$ consists of the three reduced points
\[
        [1:1:1],\qquad
        [1:\omega:\omega^2],\qquad
        [1:\omega^2:\omega].
\]
The three linear forms
\[
 \begin{aligned}
 u&=x_1+x_2+x_3,\\
 v&=x_1+\omega x_2+\omega^2x_3,\\
 w&=x_1+\omega^2x_2+\omega x_3
 \end{aligned}
\]
form a basis of $R_1$ and carry these points to the three coordinate points
of $\PP^2$.  Under this linear change of coordinates,
$I_{\mathrm{exc}}$ becomes
\[
        J=(uv,uw,vw),
\]
and
\[
        uvw=x_1^3+x_2^3+x_3^3-3x_1x_2x_3=g.
\]

The saturation of $J^m$ is its symbolic power
\[
 J^{(m)}=(u,v)^m\cap(u,w)^m\cap(v,w)^m.
\]
A monomial $u^av^bw^c$ lies in $J^{(m)}$ exactly when
\[
        a+b\geq m,\qquad a+c\geq m,\qquad b+c\geq m.
\]
If one exponent is zero, the monomial is divisible by one of
$(uv)^m,(uw)^m,(vw)^m$ and therefore belongs to $J^m$.  If all three
exponents are positive, division by $uvw$ reduces the three inequalities
from level $m$ to level $m-2$.  Induction on $m$ gives
\[
        J^{(m)}
        =\sum_{q=0}^{\lfloor m/2\rfloor}(uvw)^qJ^{m-2q}.
\]
Undoing the linear change of coordinates proves
\eqref{eq:exceptional-saturated-powers}.  The saturated extended Rees algebra
is therefore generated in multiplicities one and two.  Finally,
$I_{\mathrm{exc}}$ is generated by quadrics, so
$I_{\mathrm{exc}}^2$ contains no nonzero cubic.  Hence $g\notin
I_{\mathrm{exc}}^2$, and $gt^{-2}$ cannot be omitted.
\end{proof}

\bibliographystyle{amsplain}
\bibliography{references}

\end{document}